\DeclareMathOperator{\rank}{rk}
\DeclareMathOperator{\Ker}{Ker}
\DeclareMathOperator{\Der}{Der}
\DeclareMathOperator{\Frac}{Frac}
\DeclareMathOperator{\ad}{ad}
\newtheorem{theorem}{Theorem}
\newtheorem{lemma}{Lemma}
\newtheorem{corollary}{Corollary}
\newtheorem{note}{Remark}
\newtheorem{example}{Example}
\begin{document}
\sloppy
\title[Locally nilpotent Lie algebras of derivations of integral domains]
{Locally nilpotent Lie algebras of derivations of integral domains}
\author
{A.P.Petravchuk, O.M.Shevchyk, and  K.Ya.Sysak}
\address{A. P. Petravchuk:
Department of Algebra and Mathematical Logic, Faculty of Mechanics and Mathematics, Kyiv
Taras Shevchenko University, 64, Volodymyrska street, 01033  Kyiv, Ukraine}
\email{aptr@univ.kiev.ua , apetrav@gmail.com}
\address{O. M. Shevchyk:
Department of Algebra and Mathematical Logic, Faculty of Mechanics and Mathematics, Kyiv
Taras Shevchenko University, 64, Volodymyrska street, 01033  Kyiv, Ukraine}
\email{oshev4ik@gmail.com}
\address{K. Ya. Sysak:
Department of Algebra and Mathematical Logic, Faculty of Mechanics and Mathematics, Kyiv
Taras Shevchenko University, 64, Volodymyrska street, 01033  Kyiv, Ukraine}
\email{sysakkya@gmail.com}
\date{\today}
\keywords{Lie algebra, locally nilpotent,  derivation, triangular Lie algebra, integral domain}
\subjclass[2010]{primary 17B66; secondary 17B05, 13N15}

\begin{abstract}
Let $\mathbb K$ be a field of characteristic zero and  $A$  an integral domain over $\mathbb K.$
The Lie algebra  $\Der_{\mathbb K} A$ of all $\mathbb K$-derivations of $A$ carries very important information about the algebra $A.$ This Lie algebra is embedded into the Lie algebra $R\Der_{\mathbb K} A\subseteq \Der_{\mathbb K}R$, where $R={\rm Frac}(A)$  is the fraction field of $A.$ The rank $rk_{R}L$ of a subalgebra $L$ of $R\Der_{\mathbb K} A$ is defined as dimension  $\dim_R RL.$
We prove that every locally nilpotent subalgebra $L$ of $R\Der_{\mathbb K} A$  with $rk _{R}L=n$ has a series of ideals $0=L_0\subset L_1\subset L_2\dots \subset L_n=L$ such that $\rank_R L_i=i$ and all the quotient Lie algebras $L_{i+1}/L_{i}, i=0, \ldots , n-1,$ are abelian. We also describe  all maximal (with respect to inclusion) locally nilpotent subalgebras $L$ of the Lie algebra $R\Der_{\mathbb K} A$ with $rk _{R}L=3.$
\end{abstract}

\maketitle
%%%%%%%%%%%%%%%%%%%%%%%

\section{Introduction}

Let $\mathbb K$ be a field of characteristic zero and  $A$  an associative-commutative algebra over $\mathbb K$ that is an integral domain. The set of all $\mathbb K$-derivations of $A$ forms a Lie algebra $\Der_{\mathbb K} A$, which carries important (and often exhaustive) information about the algebra $A$ (see, for example, \cite{Siebert}). In the  case of the formal power series ring $A=\mathbb R[[x_1,x_2,\dots,x_n]],$ the structure of subalgebras of the Lie algebra $\Der_{\mathbb K}A$ is closely connected with the structure of the symmetry groups of differential equations.
Finite-dimensional subalgebras of the Lie algebra $\Der_{\mathbb K} A$, where $A=\mathbb K[[x]]$, $A=\mathbb K[[x,y]]$ and $\mathbb K=\mathbb R$ or $\mathbb K=\mathbb C$,  are described in \cite{Olver},\cite{Olver1}, \cite{Lie}.

Each derivation $D\in \Der_{\mathbb K} A$ can be  uniquely extended  to a derivation of the fraction field $R=\Frac(A)$ of $A$, and
if $r\in R$ then one can define a derivation $rD \colon R \to R$ by setting $rD(x)=r\cdot D(x)$ for all $x\in R$.
For the study of the Lie algebra $\Der_{\mathbb K} A$, it is convenient to consider a larger Lie algebra $R\Der_{\mathbb K} A$. It is an $R$-linear hull of the set $\{ rD | r\in R, D\in \Der_{\mathbb K} A\}$ and simultaneously a subalgebra (over $\mathbb K$) of the Lie algebra $\Der_{\mathbb K} R$ of all derivations of $R$.
We will denote the Lie algebra $R\Der_{\mathbb K} A$ by $W(A)$. For a subalgebra $L$ of $W(A)$ we define the rank $\rank_R L$ of $L$ over $R$ as $\rank_R L=\dim_R RL$. In \cite{MP1}, nilpotent and solvable subalgebras of finite rank of the Lie algebra $W(A)$ were studied. The structure of nilpotent Lie algebras of derivations with rank $3$ was described in \cite{PetravchukADM}. Nilpotent subalgebras of $W(A)$ with the center of large rank were characterized in \cite{SysakADM}.

In this paper, we study  locally nilpotent  subalgebras $L$ of the Lie algebra $ W(A)$ with  $\rank_R L=n$ over the fraction field $R$. In particular, we prove in Theorem 1 that $L$ contains a series of ideals
$$0=L_0\subset L_1\subset L_2\dots \subset L_n=L$$
such that  $\rank_R L_i=i$ and all the quotient Lie algebras $L_{i+1}/L_{i}$ are abelian for $i=0, 1, \dots, n-1$. Theorem 2 describes maximal (with respect to inclusion) locally nilpotent subalgebras of rank $3$ of the Lie algebra $W(A).$  Note that subalgebras of rank $1$  in $W(A)$ are one-dimensional over their field of constants. Lemma~\ref{Uzh1} describes the structure of subalgebras of rank $2$ from $W(A)$ obtained in \cite{Uzh}.  The Lie algebras $u_n(\mathbb K)$ of triangular derivations of polynomial rings, which were investigated in \cite{Bavula}, may be the reference point for the study of locally nilpotent Lie algebras of derivations.

We use standard notation. The ground field $\mathbb K$ is arbitrary of  characteristic zero.  By $R$ we denote the fraction field of the integral domain $A$.
The Lie algebra $$R\Der_{\mathbb K} A=\langle rD \ | \  r\in R, D\in \Der_{\mathbb K} A \rangle$$ is denoted by $W(A)$. A $\mathbb K$-linear hull of elements $x_1, x_2,\dots, x_n$ we write by $\mathbb K \langle x_1,x_2,\dots, x_n \rangle$.
Let $L$ be a subalgebra of $W(A)$. Then the subfield $F=F(L)$ of the field $R$ that consists of all $r\in R$ with  $D(r)=0$ for all $D\in L$ is called the field of constants for $L$. The rank $\rank_R L$ of $L$ over $R$ is defined by $\rank_R L:=\dim_R RL$, where $RL=R\langle rD| r\in R, D\in L\rangle$. If $I$ is an ideal of $L$ such that $I=RI\cap L$, then one can define the rank (over $R$) of the quotient Lie algebra $L/I$ as $\rank_R L/I:=\dim_R RL/RI$. By $u_n(\mathbb K)$ we denote the Lie algebra of all triangular derivations of the polynomial ring $\mathbb K[x_1,x_2,\dots,x_n]$. This algebra consists of all derivations of the form $$D=f_1(x_2,x_3,\dots, x_n)\frac{\partial}{\partial x_1} +f_2(x_3,x_4,\dots, x_n)\frac{\partial}{\partial x_2} + \dots +f_n\frac{\partial}{\partial x_n},$$ where $f_i\in \mathbb K[x_{i+1},\dots, x_n]$, $i=1,2,\dots, n-1$, and $f_n\in \mathbb K$.
 A Lie algebra is called locally nilpotent if every its finitely generated subalgebra is nilpotent. The Lie algebra $u_n(\mathbb K)$ is locally nilpotent but not nilpotent. It contains a series of ideals $0=I_0\subset I_1\subset \dots \subset I_n=u_n(\mathbb K)$ with abelian factors and $\rank_R I_s=s$ for all $s=0,1,\dots, n$ (see \cite{Bavula}).
Let $V$ be a vector space over $\mathbb K$ (not necessary finite dimensional) and $T$ a linear operator on $V$. The operator  $T$ is called locally nilpotent if for each $v\in V$ there exists a number $n=n(v)\geq 1$ such that $T^n(v)=0$.

\section{Series of ideals in locally nilpotent Lie algebras of derivations}
Some auxiliary results are presented in the following lemmas.
\begin{lemma}\cite[Lemma 1]{MP1}\label{product}
Let $D_1,\ D_2\in W(A)$ and $a,\ b\in R$. Then $$[aD_1,bD_2]=ab[D_1,D_2]+aD_1(b)D_2-bD_2(a)D_1.$$
\end{lemma}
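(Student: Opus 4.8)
The plan is a direct computation with the commutator bracket in $\Der_{\mathbb K}R$, the Lie algebra into which $W(A)$ is embedded. First I would note that $aD_1$ and $bD_2$ again lie in $W(A)\subseteq\Der_{\mathbb K}R$, since by definition $W(A)=R\Der_{\mathbb K}A$ is closed under multiplication by elements of $R$; hence the bracket $[aD_1,bD_2]=(aD_1)\circ(bD_2)-(bD_2)\circ(aD_1)$ is again a derivation of $R$, and to verify the stated identity it suffices to evaluate both sides at an arbitrary element $x\in R$.

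The key step is the Leibniz rule. Applying it for $D_1$,
\[
(aD_1)\bigl((bD_2)(x)\bigr)=aD_1\bigl(b\,D_2(x)\bigr)=a\,D_1(b)\,D_2(x)+ab\,D_1\bigl(D_2(x)\bigr),
\]
and symmetrically $(bD_2)\bigl((aD_1)(x)\bigr)=b\,D_2(a)\,D_1(x)+ab\,D_2\bigl(D_1(x)\bigr)$, where I have used $ab=ba$ in the last summand. Subtracting these two expressions, the second-order terms combine into $ab\,(D_1D_2-D_2D_1)(x)=ab\,[D_1,D_2](x)$, and one is left with
\[
[aD_1,bD_2](x)=ab\,[D_1,D_2](x)+a\,D_1(b)\,D_2(x)-b\,D_2(a)\,D_1(x),
\]
which is precisely the value at $x$ of $ab[D_1,D_2]+aD_1(b)D_2-bD_2(a)D_1$. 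Since $x$ was arbitrary, the identity follows. To close, I would remark that the three terms on the right-hand side genuinely belong to $W(A)$: $[D_1,D_2]\in W(A)$ because $W(A)$ is a Lie subalgebra of $\Der_{\mathbb K}R$, while the coefficients $ab$, $aD_1(b)$, $bD_2(a)$ lie in $R$. I do not expect any real obstacle here; the only points that need attention are keeping the signs of the two ``cross terms'' coming from the Leibniz rule straight, and observing that both second-order contributions carry the same scalar $ab$, so that they cancel down to $ab[D_1,D_2]$.
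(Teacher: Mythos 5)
Your computation is correct and is the standard direct verification via the Leibniz rule; the paper itself gives no proof (it cites \cite[Lemma 1]{MP1}), and the argument there is essentially the same pointwise calculation you carry out.
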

As we mentioned above, the set $RL$ is an $R$-linear hull of elements $rD$ for all $r\in R$ and $D\in L$. Analogously we define the set $FL$ for the field of constants $F=F(L)$.
\begin{lemma}\label{MakPetr}
Let $L$ be a subalgebra of the Lie algebra $W(A)$ and $F$ the field of constants for $L$. Then:
\begin{enumerate}
\item[(1)] \cite[Lemma 2]{MP1}
 $FL$ and $RL$ are $\mathbb K$-subalgebras of the Lie algebra $W(A)$. Moreover, $FL$ is a Lie algebra over $F$, and if $L$ is abelian, nilpotent or solvable, then $FL$ has the same property respectively.
\item[(2)]\cite[Lemma 4]{MP1}\label{ideal}
If $I$ is an ideal of the Lie algebra $L$, then the vector space $RI\cap L$ over $\mathbb K$ is also an ideal of $L$.
\item[(3)]\cite[Theorem 1]{MP1}
If $L$ is a nilpotent subalgebra  of  $W(A)$ of finite rank over $R$, then the Lie algebra $FL$ is finite-dimensional over the field of constants $F$.
\item[(4)]\cite[Proposition 1]{MP1}
Let $L$ be a nilpotent subalgebra of $W(A)$. If $\rank_R L=1$, then $L$  is abelian and $\dim_F FL=1$. If $\rank_R L=2$ and $\dim _{\mathbb K}L\geq 3$, then there exist $D_1, \ D_2\in FL$ and $a\in R$ such that $FL=F\langle D_1, aD_1,\dots, a^k/k! D_1,D_2\rangle$, where $[D_1,D_2]=0$ and $D_1(a)=0$, $D_2(a)=1$.
\end{enumerate}
\end{lemma}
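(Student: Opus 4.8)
The plan is to deduce (1), (2) and the rank-one half of (4) directly from the commutator identity of Lemma~\ref{product}, and to obtain (3) together with the rank-two normal form of (4) by induction on the rank --- which is where the real work sits.

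\emph{Items (1) and (2).} Writing two elements of $RL$ as $\sum_i a_iD_i$ and $\sum_j b_jE_j$ ($a_i,b_j\in R$, $D_i,E_j\in L$), Lemma~\ref{product} expands their bracket as a finite sum of terms $a_ib_j[D_i,E_j]$, $a_iD_i(b_j)E_j$, $b_jE_j(a_i)D_i$, each lying in $RL$; so $RL$ is a $\mathbb K$-subalgebra. If instead $a_i,b_j\in F$, the two derivative terms vanish (elements of $F$ are killed by every derivation in $L$) and the bracket reduces to $\sum a_ib_j[D_i,E_j]\in FL$; this simultaneously shows $FL$ is a subalgebra, that its bracket is $F$-bilinear, and --- applying the same remark along the lower central and derived series --- that $\gamma_k(FL)\subseteq F\gamma_k(L)$ and $(FL)^{(k)}\subseteq FL^{(k)}$, so $FL$ is abelian/nilpotent/solvable whenever $L$ is. For (2), $RI\cap L$ is a $\mathbb K$-subspace, and for $D\in L$ and $z=\sum_i r_iE_i\in RI\cap L$ ($r_i\in R$, $E_i\in I$), Lemma~\ref{product} gives $[D,z]=\sum_i\bigl(r_i[D,E_i]+D(r_i)E_i\bigr)\in RI$, while also $[D,z]\in[L,L]\subseteq L$; hence $[D,z]\in RI\cap L$, so $RI\cap L$ is an ideal.

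\emph{Item (4), rank one.} By (1) we may assume $L=FL$ and $RL=RD_1$. For $D=aD_1$, $E=bD_1\in L$ one computes $(\ad D)^k(E)=c_kD_1$ with $c_{k+1}=aD_1(c_k)-c_kD_1(a)$; nilpotency forces $c_N=0$ for $N$ the nilpotency class, and since $aD_1(c)-cD_1(a)=0$ is equivalent to $D_1(c/a)=0$, i.e.\ to $c/a$ lying in the constant field $\Ker(D_1)\cap R$ (this is where $\rank_RL=1$ enters), the first vanishing $c_{m+1}=0$ propagates back to $c_1=0$; thus $L$ is abelian, whence every $E\in L$ is $gD$ for a fixed $0\neq D$ with $g$ a constant, giving $\dim_FFL=1$.

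\emph{Item (3) and the rank-two normal form --- the main obstacle.} Assume $L=FL$ and induct on $n=\rank_RL$, the base case being the rank-one case above. For $n\ge2$ one filters $RL$ by the $R$-submodules $V_i:=R\gamma_{i+1}(L)$ (so $RL=V_0\supseteq V_1\supseteq\cdots\supseteq V_c=0$, $c$ the nilpotency class), sets $W_i:=V_i\cap L$ --- an ideal of $L$ by (2), with $\gamma_{i+1}(L)\subseteq W_i$ and $W_i/W_{i+1}\hookrightarrow V_i/V_{i+1}$ --- and checks via Lemma~\ref{product} that on the free $R$-module $V_i/V_{i+1}$ the operator $\ad x$ acts like the derivation $x$ diagonally on coordinates. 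Combining this with the rank-one analysis and the inductive hypothesis applied to the subquotients, one aims to conclude $\dim_F(W_i/W_{i+1})<\infty$, hence $\dim_FFL<\infty$; and when $n=2$ to read off the normal form $FL=F\langle D_1,aD_1,\dots,a^k/k!\,D_1,D_2\rangle$ with $[D_1,D_2]=0$, $D_1(a)=0$, $D_2(a)=1$ (the ``monomials'' being finitely many by ad-nilpotency of $D_2$). The crux --- and the genuine content of the theorem --- is exactly this finiteness over $F$ of the graded pieces: a general $L$-submodule of $R^k$ need not be finite-dimensional over $F$ (witness $u_n(\mathbb K)$ acting on $\mathbb K[x_1,\dots,x_n]$), so one must exploit that each $W_i/W_{i+1}$ is $R$-spanned by images of the iterated-bracket space $\gamma_{i+1}(L)$ and that nilpotency forces these to be small; controlling how the constant field can enlarge when one restricts to the ideals $W_i$ or passes to subquotients is the other delicate point. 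Everything else is bookkeeping with Lemma~\ref{product}.
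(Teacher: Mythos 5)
The paper itself gives no proof of this lemma: all four items are quoted from \cite{MP1} (Lemmas~2 and~4, Theorem~1 and Proposition~1 there), so your attempt has to stand on its own. Your treatment of items (1) and (2) is correct and is exactly the routine computation with Lemma~\ref{product}; no objection there.

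The rank-one half of (4), however, contains a genuine gap. From $c_{m+1}=aD_1(c_m)-c_mD_1(a)=0$ you may conclude only that $D_1(c_m/a)=0$, i.e.\ that $c_m=\lambda a$ for some $\lambda\in F$; this does \emph{not} force $c_m=0$, so the claimed propagation down to $c_1=0$ breaks at the very first step. (Concretely, for $D=\partial_x$, $E=x\partial_x$ one has $c_1=1\neq 0$ yet $c_2=0$; the algebra they generate is of course not nilpotent, which shows that the single relation $c_N=0$ you extract from nilpotency is strictly weaker than nilpotency itself and cannot suffice.) The standard repair is to take $D_1$ a nonzero element of $Z(L)$ --- the last nonvanishing term of the lower central series is central --- after which $0=[D_1,rD_1]=D_1(r)D_1$ gives $r\in\Ker D_1=F$ immediately, hence $L\subseteq FD_1$. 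More seriously, for item (3) and the rank-two normal form you stop exactly at the decisive point: you set up the filtration by the ideals $W_i=R\gamma_{i+1}(L)\cap L$, correctly identify the finiteness of $\dim_F(W_i/W_{i+1})$ as ``the crux and the genuine content of the theorem,'' and then only state that one ``aims to conclude'' it. That finiteness is precisely Theorem~1 of \cite{MP1} and is the hard part of the whole lemma; declaring the remainder bookkeeping does not discharge it, nor does your sketch actually produce the element $a$ with $D_1(a)=0$, $D_2(a)=1$ in the rank-two normal form. As it stands, the proposal establishes (1)--(2), proves the rank-one part of (4) only modulo a real (though fixable) gap, and leaves (3) and the rank-two part of (4) unproved.
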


\begin{lemma}\cite[Lemmas 5, 8]{Uzh}\label{loc_nilp}
Let $L$ be a locally nilpotent subalgebra of rank $n$ over $R$ of the Lie algebra $W(A)$ and $F$ the field of constants for $L$. Then
\begin{enumerate}
\item[(1)] The Lie algebra $FL$  over $F$  is locally nilpotent and $\rank_R FL=n$.
\item[(2)] If the derived Lie algebra $L'=[L,L]$ is of rank $k$ over $R,$ then $M=RL' \cap L$ is an ideal of $L$ such that $\rank_R M=\rank_R L'$ and $FL/FM$ is an abelian Lie algebra with $\dim _{F}(FL/FM)=n-k.$
\end{enumerate}
\end{lemma}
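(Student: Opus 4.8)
The plan is to handle the two assertions separately, reducing everything possible to the commutator identity of Lemma~\ref{product} and to the nilpotent case already settled in Lemma~\ref{MakPetr}. For part (1) I would first observe that $FL$ is a Lie algebra over $F$ by Lemma~\ref{MakPetr}(1) and that $R\cdot FL=RL$ because $F\subseteq R$, so $\rank_R FL=\dim_R RL=n$ with no further work. For local nilpotency over $F$ I would take an arbitrary finitely generated $F$-subalgebra $H$ of $FL$, write its finitely many generators as $F$-combinations of finitely many elements of $L$, and let $L_0$ be the $\mathbb K$-subalgebra of $L$ generated by all of those elements; then $L_0$ is finitely generated, hence nilpotent, so $FL_0$ is nilpotent over $F$ by Lemma~\ref{MakPetr}(1), and $H\subseteq FL_0$ is therefore nilpotent. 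Hence $FL$ is locally nilpotent.

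For part (2) the formal points come first: $M=RL'\cap L$ is an ideal of $L$ by Lemma~\ref{MakPetr}(2) applied to the ideal $L'$, and from $L'\subseteq M\subseteq RL'$ one gets $RM=RL'$, so $\rank_R M=\rank_R L'=k$. Next, for $f,g\in F$ and derivations $D_1,D_2$, Lemma~\ref{product} gives $[fD_1,gD_2]=fg[D_1,D_2]$ because $D_1(g)=D_2(f)=0$; taking $D_1,D_2\in L$ this yields $[FL,FL]=FL'$, and taking $D_1\in L$, $D_2\in M$ it shows that $FM$ is an ideal of $FL$. Since $L'\subseteq M$ gives $FL'\subseteq FM$, the quotient $FL/FM$ is abelian over $F$.

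The real content is the equality $\dim_F(FL/FM)=n-k$, and I would prove it by a lower and an upper bound. For the lower bound, the $F$-linear map $\psi\colon FL\to RL/RM$ induced by the inclusion $FL\subseteq RL$ annihilates $FM$, and the $R$-span of its image is all of $RL/RM$ since $R\cdot FL=RL$; because an $F$-subspace of an $(n-k)$-dimensional $R$-vector space whose $R$-span is the whole space must contain $n-k$ elements that are $R$-linearly independent, hence $F$-linearly independent, the image of $\psi$ has $F$-dimension at least $n-k$, and so $\dim_F(FL/FM)\ge n-k$. For the upper bound, I would fix $D_1,\dots,D_n\in L$ forming an $R$-basis of $RL$ with $D_1,\dots,D_k\in M$ an $R$-basis of $RM$; for an arbitrary $x=\sum_{i=1}^n r_iD_i\in FL$ ($r_i\in R$) and any $j$, the identity $[x,D_j]=\sum_i r_i[D_i,D_j]-\sum_i D_j(r_i)D_i$ of Lemma~\ref{product}, combined with $[x,D_j]\in[FL,FL]=FL'\subseteq RM$ and $[D_i,D_j]\in L'\subseteq RM$, forces $\sum_i D_j(r_i)D_i\in RM=\bigoplus_{i\le k}RD_i$ and hence $D_j(r_i)=0$ for all $i>k$. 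Since $D_1,\dots,D_n$ form an $R$-basis of $RL$, each element of $L$ is an $R$-combination of them and so annihilates $r_{k+1},\dots,r_n$; thus $r_{k+1},\dots,r_n\in F$, and consequently $FL=(FL\cap RM)+F\langle D_{k+1},\dots,D_n\rangle$.

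The hard part, which I expect to be the main obstacle, is the closedness identity $FL\cap RM=FM$: it makes the previous decomposition give $\dim_F(FL/FM)\le n-k$, and combined with the lower bound it yields the desired equality. The inclusion $FM\subseteq FL\cap RM$ is immediate, but the reverse requires showing that an element of $FL$ lying in $RM$ is already an $F$-combination of elements of $M$ — in effect, upgrading arbitrary $R$-coefficients back to constants inside $M$, i.e. proving an analogue of the whole statement for the smaller algebra $M$. I plan to obtain it by induction on the rank, applying the already-proved parts of the lemma (and of Lemma~\ref{MakPetr}) to $M$ and its derived algebra, together with the same reduction to finitely generated, hence nilpotent, subalgebras that underlies part (1); this is also the point where the passage from ``nilpotent'' to ``locally nilpotent'' really has to be carried out. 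Granting $FL\cap RM=FM$, the two bounds give $\dim_F(FL/FM)=n-k$, completing the proof of both parts.
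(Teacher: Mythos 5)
First, note that the paper does not prove this lemma at all: it is imported verbatim from \cite[Lemmas 5, 8]{Uzh}, so there is no in-text proof to compare your argument against. Judged on its own terms, most of your proposal is correct. Part (1) works (with the small caveat that Lemma~\ref{MakPetr}(1) applied to the finitely generated subalgebra $L_0$ gives nilpotency of $F(L_0)L_0$ over the possibly larger field $F(L_0)\supseteq F$; you then need the one-line remark that $FL_0$ is a subalgebra of it, hence nilpotent). In part (2), the identifications $RM=RL'$, $[FL,FL]=FL'\subseteq FM$, the lower bound $\dim_F(FL/FM)\ge n-k$, and the computation showing that any $x=\sum_i r_iD_i\in FL$ has $r_i\in F$ for $i>k$ are all correct and cleanly reduce the statement to the single claim $FL\cap RM=FM$.

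That claim is exactly where your proof stops being a proof. You flag it yourself as ``the hard part'' and offer only a plan (``induction on the rank, applying the already-proved parts of the lemma\dots together with the same reduction to finitely generated subalgebras''), with no induction hypothesis stated and no indication of how the inductive step closes. The gap is genuine, for two reasons. First, the statement $FL\cap W=F(L\cap W)$ is \emph{false} for a general $\mathbb K$-subspace $L$ of an $R$-space $V$ with $RL=V$ and a general $R$-subspace $W$: e.g.\ take $V=R^2$, $W=Re_1$, and $L=\mathbb K\langle e_1,\ rE,\ r'E'\rangle$ with suitable $r,r'\in R$ so that $L\cap W=\mathbb Ke_1$ while $FL\cap W$ contains an element $fe_1$ produced by an $F$-combination of the other two generators with $f\notin F$. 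So the equality must exploit the derivation/Lie structure in an essential way, and your sketch does not say how. Second, the obvious reduction to the nilpotent case breaks down precisely here: a finitely generated subalgebra $N\ni E,D_1,\dots,D_n$ is nilpotent, but its field of constants $F(N)$ is in general strictly larger than $F$, and the conclusion one extracts from Lemma~\ref{series_nilp_case} or Lemma~\ref{MakPetr}(3)--(4) lives over $F(N)$, not over $F$; you would still have to descend from $F(N)$-coefficients to $F$-coefficients, which is a statement of the same nature as the one being proved. Until $FL\cap RM\subseteq FM$ is actually established, the upper bound $\dim_F(FL/FM)\le n-k$ is not proved, and with it the only nontrivial assertion of part (2).
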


\begin{lemma}\cite[Lemma 5]{MP1}\label{series_nilp_case}
Let $L$ be a nilpotent subalgebra  of rank $n>0$ over $R$ of the Lie algebra $W(A)$ and $F$ the field of constants for $L$. Then $L$ contains a series of ideals  $0=I_0\subset I_1\subset \dots\subset I_{n-1}\subset I_n=L$ such that $\rank_R I_s=s$ and $[I_s, I_s]\subseteq I_{s-1}$ for all $s=1,\dots, n$. Moreover, $\dim_F FL/FI_{n-1}=1$.
\end{lemma}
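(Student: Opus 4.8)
The plan is to realise the flag by first constructing a \emph{central} series of ideals of $L$ whose $R$-ranks strictly decrease, and then inserting intermediate ideals inside each central factor so that the rank drops by exactly one at every step. Working inside $L$ (rather than inside $FL$) throughout keeps all the terms genuine ideals of $L$ and sidesteps the delicate passage between $FL$-ideals and $L$-ideals.

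First I would build the central series. Put $M_0=L$ and, inductively, $M_{i+1}=R[L,M_i]\cap L$. Since $M_i$ is an ideal of $L$, so is $[L,M_i]$, and hence $M_{i+1}$ is an ideal of $L$ by Lemma \ref{MakPetr}(2); it is $R$-saturated, so $RM_{i+1}=R[L,M_i]$, and (because $M_i$ is saturated) $M_{i+1}\subseteq RM_i\cap L=M_i$, while $[L,M_i]\subseteq M_{i+1}$ makes each factor $M_i/M_{i+1}$ central in $L/M_{i+1}$. Everything then hinges on the following rank-drop property: \emph{if $L$ is nilpotent of finite rank and $J$ is a nonzero ideal of $L$ with $[L,J]\ne 0$, then $\rank_R[L,J]<\rank_R J$.} Granting this, $\rank_R M_{i+1}=\rank_R[L,M_i]<\rank_R M_i$ as long as $[L,M_i]\ne 0$, and $[L,M_i]=0$ gives $M_{i+1}=0$; so the chain $L=M_0\supsetneq M_1\supsetneq\dots\supsetneq M_t=0$ has strictly decreasing ranks $n=\rho_0>\rho_1>\dots>\rho_t=0$.

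Next I refine. In each factor, since $\rank_R M_i>\rank_R M_{i+1}$, I can successively choose $v_1,v_2,\dots\in M_i$ with $v_p\notin R(M_{i+1}+\mathbb K\langle v_1,\dots,v_{p-1}\rangle)$, producing subspaces $M_{i+1}=U_0\subset U_1\subset\dots\subset U_{\rho_i-\rho_{i+1}}=M_i$ with $\rank_R U_p=\rho_{i+1}+p$. Each $U_p$ is an ideal of $L$ because $[L,U_p]\subseteq[L,M_i]\subseteq M_{i+1}\subseteq U_p$, and moreover $[U_p,U_p]\subseteq[M_i,M_i]\subseteq[L,M_i]\subseteq M_{i+1}=U_0\subseteq U_{p-1}$. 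Concatenating these refinements over all factors (the top $M_{i+1}$ of factor $i$ being the bottom of factor $i+1$) and relabelling by rank yields ideals $0=I_0\subset I_1\subset\dots\subset I_n=L$ of $L$ with $\rank_R I_s=s$ and $[I_s,I_s]\subseteq I_{s-1}$; the strict decrease of the $\rho_i$ is exactly what guarantees that each rank $0,1,\dots,n$ is attained once, so the relabelling is unambiguous. For the last assertion, at the very top $I_n=L=I_{n-1}+\mathbb K v$ with $v\notin RI_{n-1}$; then $FL=FI_{n-1}+Fv$, and since $FI_{n-1}\subseteq RI_{n-1}$ we get $v\notin FI_{n-1}$, whence $\dim_F FL/FI_{n-1}=1$.

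The hard part is the rank-drop property, and the obstruction is visible already in Lemma \ref{product}: for $X\in L$ and $rY\in RJ$ one has $[X,rY]=r[X,Y]+X(r)Y\in RJ$, so $[L,RJ]\subseteq RJ$ and brackets never escape $RJ$; the stray term $X(r)Y$ defeats any naive rank descent. To prove it I would use that $FL$ is finite-dimensional over $F$ (Lemma \ref{MakPetr}(3)) and that each $\ad_X$ acts nilpotently on the ideal $J$, and induct on the nilpotency class of $L$, peeling off the top term $L^{c}\subseteq Z(L)$ of the lower central series. The mechanism is clearest when $[L,J]$ is central: a maximal $R$-independent family $z_1,\dots,z_\rho$ drawn from $Z(L)$ with $R\langle z_1,\dots,z_\rho\rangle=RJ$ then consists of pairwise commuting derivations, and writing the generators of $J$ as $\sum_i r_iz_i$ the centrality relations $z_k(r_i)=0$ force, through Lemma \ref{product}, the vanishing of the relevant commutators, so $[L,J]$ cannot span $RJ$ over $R$. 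Finally, once the rank-drop property is known (in particular its instance $k:=\rank_R[L,L]<n$), Lemma \ref{loc_nilp}(2) gives a convenient shortcut for the topmost factor: it produces the saturated ideal $M=R[L,L]\cap L$ of rank $k$ with $FL/FM$ abelian of dimension $n-k$, and lifting an $F$-basis of $FL/FM$ to elements of $L$ and refining as above reconstructs $I_k\subset\dots\subset I_{n-1}\subset I_n=L$ together with $\dim_F FL/FI_{n-1}=1$.
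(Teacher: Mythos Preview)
Your central claim—the ``rank-drop property'' that $\rank_R[L,J]<\rank_R J$ whenever $J$ is an ideal with $[L,J]\ne 0$—is false, and with it the descending construction collapses. Take $A=\mathbb K[x,y]$, $D_1=\partial/\partial x$, $D_2=\partial/\partial y$, $a=y$, and $L=\mathbb K\langle D_1,\,aD_1,\,D_2\rangle$: this is a nilpotent (Heisenberg) subalgebra of $W(A)$ of rank $2$ over $R$. Here $L'=\mathbb K\langle D_1\rangle$, so your $M_1=RL'\cap L=\mathbb K\langle D_1,\,aD_1\rangle$ has rank $1$, and then $[L,M_1]=\mathbb K\langle D_1\rangle$ again has $R$-rank $1$. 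Hence $M_2=R[L,M_1]\cap L=M_1$: the chain stabilises at a nonzero ideal and never reaches $0$. Your sketch of the rank-drop argument breaks exactly here: the relations $z_k(r_i)=0$ only kill brackets $[z_k,\sum_i r_iz_i]$ with the \emph{central} elements $z_k$, whereas $[L,J]$ also contains brackets with non-central elements of $L$ (in the example $[D_2,aD_1]=D_1$), and these can span $RJ$. The very phenomenon you flag in Lemma~\ref{product}—the stray term $X(r)Y$—is what makes the saturated ideal $M_1$ absorb $aD_1$ and then regenerate $D_1$ under $\ad D_2$ indefinitely.

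The lemma is quoted from \cite{MP1} without proof here, but the paper's own Theorem~\ref{th1} proves the locally nilpotent generalisation, and its method is the one that works: build the flag \emph{upward} from the centre rather than downward from $L'$. Since $L$ is nilpotent, $Z(L)\ne 0$; choose $0\ne D_1\in Z(L)$ and set $I_1=RD_1\cap L$. The quotient $L/I_1$ is again nilpotent, so $Z(L/I_1)\ne 0$; choose $D_2+I_1$ there, set $I_2=(RD_1+RD_2)\cap L$, and continue. Each $I_s$ is an ideal of rank $s$ by Lemma~\ref{MakPetr}(2), each factor $I_s/I_{s-1}$ has rank $1$ and is therefore abelian (Lemma~\ref{MakPetr}(4)), giving $[I_s,I_s]\subseteq I_{s-1}$; at the top $FL=FI_{n-1}+FD_n$ with $D_n\notin RI_{n-1}\supseteq FI_{n-1}$ yields $\dim_F FL/FI_{n-1}=1$. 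The point is that nilpotency guarantees nonzero centres in every quotient, which drives the rank up one step at a time; it does \emph{not} guarantee that $R$-saturated descending commutators shrink in rank.
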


\begin{lemma}\label{abelian_factor}
Let $L$ be a locally nilpotent subalgebra of the Lie algebra  $W(A)$. Let $L_1\subseteq L_2$ be subalgebras of $L$ such that $L_1=RL_1\cap L_2$ is an ideal of $L_2$. If $\rank_R L_2/L_1=1$, then $L_2/L_1$ is an abelian quotient Lie algebra.
\end{lemma}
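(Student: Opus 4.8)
The plan is to prove the stronger statement $[L_2,L_2]\subseteq L_1$; since $L_1$ is an ideal of $L_2$, this is precisely the assertion that the quotient Lie algebra $L_2/L_1$ is abelian. Because $\rank_R L_2/L_1=1$ we may fix a derivation $D_0\in L_2\setminus RL_1$, and then $RL_2=RL_1\oplus RD_0$ as $R$-modules (the sum is direct since $RL_1\cap RD_0=0$). Let $\lambda\colon L_2\to R$ be the restriction to $L_2$ of the $R$-linear projection $RL_2\to RD_0\cong R$. By the hypothesis $L_1=RL_1\cap L_2$, the map $\lambda$ is $\mathbb K$-linear with $\Ker\lambda=L_1$, so it identifies $L_2/L_1$ with the $\mathbb K$-subspace $\lambda(L_2)$ of $R$.

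Next I would compute brackets modulo $RL_1$. Writing $E_i=\lambda(E_i)D_0+F_i$ with $F_i\in RL_1$ for arbitrary $E_1,E_2\in L_2$, and using that $L_1$ is an ideal of $L_2$ and that $RL_1$ is a subalgebra of $W(A)$ (Lemma~\ref{MakPetr}(1)), a direct computation with Lemma~\ref{product} gives $[E_1,E_2]\equiv\bigl(E_1(\lambda(E_2))-E_2(\lambda(E_1))\bigr)D_0\pmod{RL_1}$, that is, $\lambda([E_1,E_2])=E_1(\lambda(E_2))-E_2(\lambda(E_1))$. Applying this with $E_2$ replaced by any $G\in L_1$ (so that $[E_1,G]\in L_1=\Ker\lambda$) yields $G(\lambda(E_1))=0$; hence every value $\lambda(E)$, $E\in L_2$, is a constant for $L_1$, and consequently $F_i(\lambda(E_j))=0$. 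Feeding this back produces the key identity
$$\lambda([E_1,E_2])=\lambda(E_1)\,D_0(\lambda(E_2))-\lambda(E_2)\,D_0(\lambda(E_1)),$$
whose right-hand side is exactly the coefficient of $D_0$ in the bracket $[\lambda(E_1)D_0,\lambda(E_2)D_0]$ computed inside $W(A)$ (again by Lemma~\ref{product}).

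Now fix an arbitrary pair $E_1,E_2\in L_2$ and set $N=\langle E_1,E_2\rangle$; since $L$ is locally nilpotent, $N$ is nilpotent. Put $N_1=N\cap L_1=RL_1\cap N$. A short check shows $N_1$ is an ideal of $N$ (because $[N,N_1]\subseteq L_1\cap N=N_1$), so $N/N_1$ is nilpotent, and the identity above shows that $\bar E\mapsto\lambda(E)D_0$ is a well-defined injective homomorphism of Lie algebras from $N/N_1$ into $W(A)$. Its image $S D_0$, with $S=\lambda(N)\subseteq R$, is thus a nilpotent subalgebra of $W(A)$ contained in $RD_0$, hence of rank at most $1$ over $R$; by Lemma~\ref{MakPetr}(4) it is abelian. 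Therefore $N/N_1$ is abelian, i.e.\ $[E_1,E_2]\in N_1\subseteq L_1$. As $E_1,E_2$ were arbitrary, $[L_2,L_2]\subseteq L_1$, so $L_2/L_1$ is abelian.

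The only genuinely delicate point is the bracket computation in the second paragraph: one must keep careful track of which of the summands produced by Lemma~\ref{product} fall into $RL_1$, and one must use that $\lambda(L_2)$ lies in the field of constants of $L_1$ in order to discard the terms $F_i(\lambda(E_j))$, so that the surviving expression becomes precisely the structure constant of a bracket of two elements of $RD_0$. Everything else --- the reduction to a finitely generated, hence nilpotent, subalgebra $N$, the verification that $N_1$ is an ideal of $N$, and the appeal to Lemma~\ref{MakPetr}(4) for rank-one nilpotent subalgebras --- is routine.
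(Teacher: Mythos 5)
Your proof is correct, and its engine is the same as the paper's: pass to a finitely generated (hence nilpotent) subalgebra, reduce to a rank-one situation, and invoke Lemma~\ref{MakPetr}(4) to conclude that a nilpotent subalgebra of rank one over $R$ is abelian. The difference is in the bookkeeping, and it is to your credit. The paper's proof asserts that every element of $L_2/L_1$ has the form $rD+L_1$ and then works with the two-generator subalgebra $\mathbb K\langle D, rD\rangle$ of $L$; strictly speaking an element $E\in L_2$ only satisfies $E\equiv rD\pmod{RL_1}$, the correction term lies in $RL_1$ rather than in $L_1$, and $rD$ itself need not belong to $L$ at all, so both the passage to $L_3$ and the final bracket computation are being done somewhat informally. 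Your argument repairs exactly this point: the projection $\lambda$, the observation that $\lambda(L_2)$ consists of constants for $L_1$ (obtained by specializing your bracket identity to $E_2\in L_1$), and the resulting formula $\lambda([E_1,E_2])=\lambda(E_1)D_0(\lambda(E_2))-\lambda(E_2)D_0(\lambda(E_1))$ show that the $RL_1$-correction terms do not disturb the bracket modulo $RL_1$, so that $E\mapsto\lambda(E)D_0$ is a genuine Lie homomorphism with kernel $L_1$, landing in a nilpotent subalgebra of $RD_0$. All the remaining steps (the decomposition $RL_2=RL_1\oplus RD_0$, the verification that $N_1=N\cap L_1$ is an ideal of the nilpotent algebra $N$, the appeal to Lemma~\ref{MakPetr}(4)) check out, so your version is longer but more rigorous than the published one while following the same underlying idea.
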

\begin{proof}
Let $D+L_1$ be a nonzero element of $L_2/L_1$. Then each element of $L_2/L_1$ is of the form $rD+L_1$ for some $r\in R$. The elements $D$ and $rD$ generate a nilpotent  subalgebra  $L_3=\mathbb K\langle D, rD\rangle$ of the Lie  algebra $L$ since $L$ is  locally nilpotent. Every nilpotent subalgebra of  rank 1 over $R$ from $W(A)$ is abelian (Lemma~\ref{MakPetr} (4)). Thus $L_3$ is an abelian Lie algebra.
Then $$[D+L_1, rD+L_1]=[D,rD]+L_1\subseteq L_1.$$
Since $D$, $rD$ are arbitrarily chosen, $[L_2/L_1, L_2/L_1]\subseteq L_1$ and the quotient Lie algebra $L_2/L_1$ is abelian.
\end{proof}

\begin{note}\label{note}
Let $L$ be a subalgebra of finite rank over $R$ of the Lie algebra $W(A)$ and  $I$  a proper ideal of $L$ such that $I=RI\cap L$. Then $\rank_R L >\rank_R I.$
\end{note}

\begin{lemma}\label{factor_rank}
Let $L$ be a locally nilpotent subalgebra of finite rank over $R$ of  the Lie algebra $W(A)$. Let $I$ be an ideal of  $L$ such that $I=RI\cap L.$  If the quotient Lie algebra $L/I$ is nonzero, then $\rank_R (L/I)'<\rank_R(L/I).$
\end{lemma}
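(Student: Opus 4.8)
The strategy is to rephrase the inequality in terms of $R$-linear spans and then, using local nilpotency, to reduce it to the already available structure theory for nilpotent Lie algebras of derivations. First note that, since $I=RI\cap L$, we have $\rank_R(L/I)=\dim_R(RL/RI)$; moreover $I$ is saturated inside $L'+I$ (because $RI\cap(L'+I)\subseteq RI\cap L=I$), so
$$\rank_R(L/I)'=\rank_R\big((L'+I)/I\big)=\dim_R\big((RL'+RI)/RI\big).$$
Hence the claimed inequality is equivalent to $RL'+RI\neq RL$. So I would argue by contradiction: assume $RL'+RI=RL$; since $L/I\neq 0$, Remark~\ref{note} gives $\rank_R I<\rank_R L=:n$, and the goal becomes to contradict $I\neq L$.

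The next step is to descend to a finitely generated (hence nilpotent) subalgebra. Put $p=\rank_R I$ and choose $\phi_1,\dots,\phi_p\in I$ forming an $R$-basis of $RI$. Since $L'$ spans $RL'$ over $R$ and $RL'+RI=RL$, one may pick $E_1,\dots,E_{n-p}\in L'$ whose images span $RL/RI$, so that $\phi_1,\dots,\phi_p,E_1,\dots,E_{n-p}$ is an $R$-basis of $RL$; write each $E_i$ as a finite sum of commutators $[A_{ij},B_{ij}]$ with $A_{ij},B_{ij}\in L$. Set $S=\mathbb K\langle\phi_i,A_{ij},B_{ij},E_i\rangle$ and $T=S\cap I$. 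Then $S$ is nilpotent; $T$ is an ideal of $S$ with $T=RT\cap S$ (because $I=RI\cap L$); $RS=RL$, so $\rank_R S=n$; $RT=RI$, so $\rank_R T=p<n$; and, since the $E_i$ lie in $[S,S]$, the subalgebra $S$ already contains the chosen $R$-basis of $RL$ split between $[S,S]$ and $T$, whence $RS'+RT=RS$. So it suffices to prove: if $S$ is a nilpotent subalgebra of $W(A)$ of finite rank $n$ and $T$ is a proper ideal of $S$ with $T=RT\cap S$, then $RS'+RT\neq RS$.

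I would prove this last statement by induction on $n$. For $n=1$, $S$ is abelian (Lemma~\ref{MakPetr}(4)), $T=0$, and $RS'+RT=0\neq RS$. For $n\ge 2$, take a series $0=I_0\subset I_1\subset\dots\subset I_n=S$ as in Lemma~\ref{series_nilp_case}, so $[I_s,I_s]\subseteq I_{s-1}$, $\rank_R I_s=s$, $\dim_F(FS/FI_{n-1})=1$; in particular $S'\subseteq I_{n-1}$. If $T\subseteq I_{n-1}$ (which holds, for instance, whenever $RT\subseteq RS'$), then $S'+T\subseteq I_{n-1}$ and $RS'+RT\subseteq RI_{n-1}\subsetneq RS$, and we are done. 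If $T\not\subseteq I_{n-1}$, then $RT+RI_{n-1}=RS$, the intersection $T\cap I_{n-1}$ is a proper saturated ideal of $I_{n-1}$ of rank at most $p-1<n-1$, and the inductive hypothesis applied to $(I_{n-1},T\cap I_{n-1})$ yields $R(I_{n-1})'+R(T\cap I_{n-1})\neq RI_{n-1}$. One then has to control $RS'$ inside $RI_{n-1}$: writing $FS=FI_{n-1}+Fw$ with $w\in S$ (possible as $\dim_F(FS/FI_{n-1})=1$), a direct computation using $D(f)=0$ for $D\in S$, $f\in F$ and the fact that $I_{n-1}$ is an ideal gives $RS'=R(I_{n-1})'+R[I_{n-1},w]$, where $\ad_w$ acts $F$-linearly and locally nilpotently on $FI_{n-1}$, a space finite-dimensional over $F$ by Lemma~\ref{MakPetr}(3); hence $[I_{n-1},w]$ lies in a proper $F$-subspace of $FI_{n-1}$. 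Combining these facts with the inductive inequality one forces $RS'+RT\subsetneq RS$, contradicting the standing assumption.

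The step I expect to be the main obstacle is precisely this last case ($T$ transverse to $I_{n-1}$): one must simultaneously bound the size of $RS'$ inside $RI_{n-1}$ and compare $RT\cap RI_{n-1}$ with $R(T\cap I_{n-1})$, so the delicate point is keeping exact track of how the $R$-spans interact with the $\mathbb K$-subalgebras $S$, $T$, $I_{n-1}$ and with the field of constants $F$. The finite-dimensionality of $FI_{n-1}$ over $F$ (Lemma~\ref{MakPetr}(3)) and the nilpotency of $\ad_w$ on it are the tools that should make this manageable, but that dimension bookkeeping is where the real work of the proof lies; the reduction in the first two paragraphs, by contrast, is essentially formal.
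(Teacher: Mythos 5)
Your opening reduction is sound and in fact mirrors the paper's first step: translating the inequality into $RL'+RI\neq RL$, passing to a finitely generated (hence nilpotent, by local nilpotency) subalgebra $S$ containing an $R$-basis of $RI$ inside $I$ together with elements of $L'$ spanning $RL$ modulo $RI$, and setting $T=S\cap I$, correctly transfers the contradiction hypothesis to the nilpotent pair $(S,T)$. (The paper does essentially the same thing with $L_1=N+I$ and the nilpotent quotient $L_1/I\simeq N/(N\cap I)$.) The problem is your proof of the resulting claim that $RS'+RT\neq RS$ for a nilpotent $S$ with proper saturated ideal $T$: the case $T\not\subseteq I_{n-1}$ is not actually proved, and the tools you name do not close it. First, knowing that $[FI_{n-1},w]$ lies in a proper $F$-subspace of the finite-dimensional $F$-space $FI_{n-1}$ gives no bound on $\dim_R R[I_{n-1},w]$: a proper $F$-subspace (e.g. $F\langle D_1,D_2\rangle$ inside $F\langle D_1,D_2,aD_1\rangle$) can still have full $R$-span, so this confines $RS'$ no better than the inclusion $RS'\subseteq RI_{n-1}$ you already have. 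Second, to combine the inductive inequality $R(I_{n-1})'+R(T\cap I_{n-1})\subsetneq RI_{n-1}$ with $RT$ you would need something like $RT\cap RI_{n-1}=R(T\cap I_{n-1})$, which is not established: an $R$-linear combination of elements of $T$ can land in $RI_{n-1}$ without being an $R$-combination of elements of $T\cap I_{n-1}$. So the ``dimension bookkeeping'' you defer is not routine, and as written the argument does not yield $RS'+RT\subsetneq RS$ in the transverse case.

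The structural reason for the difficulty is that the series of Lemma~\ref{series_nilp_case} is built with no reference to $T$, so there is no reason for $T$ to be compatible with it. The paper avoids this by building the chain on top of the ideal itself: since $S/T$ (resp. $L_1/I$ in the paper) is nilpotent, each successive quotient has nonzero center, so one lifts central elements to obtain saturated ideals $T=J_0\subset J_1\subset\dots\subset J_{n-p}=S$ with $\rank_R J_s=p+s$; the top quotient $S/J_{n-p-1}$ has rank one and is nilpotent, hence abelian by Lemma~\ref{abelian_factor}, so $S'\subseteq J_{n-p-1}$ and therefore $RS'+RT\subseteq RJ_{n-p-1}$, which has rank $n-1<n$. (This uses only the elementary fact that a nonzero nilpotent Lie algebra has nonzero center, not Lemma~\ref{factor_center}, so there is no circularity.) Replacing your induction by this $T$-adapted chain repairs the proof and reduces it to the paper's argument.
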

\begin{proof}
Suppose to the contrary that there exist a subalgebra $L$ of $W(A)$ and an ideal $I$ of $L$ that satisfy the conditions of the lemma, and
$\rank_R(L/I)'=\rank_R (L/I).$ Then this rank equals  $n-k,$ where  $\rank_R L=n$, $\rank_R I=k.$  Choose a basis $\{\overline D_1, \overline D_2,  \ldots, \overline D_{n-k}\}$ of $(L/I)'$ as the set of vectors  over $R.$ Under our assumptions this basis is also a basis of $L/I$ over $R$. Each $\overline D_i\in  (L/I)'$ is a sum of some commutators from $L/I$, that is
$$\overline D_i=\sum_{j=1}^{k_i} [\overline S_j^{(i)},\overline T_j^{(i)}]=\sum_{j=1}^{k_i} [S_j^{(i)},T_j^{(i)}]+I, \ \ \ k_i\geq 1, \ i=1,2,\dots, n-k$$ for some $\overline S_j^{(i)}, \ \overline T_j^{(i)}\in L/I$. Let us denote by $N$ the subalgebra of $L$ generated  by representatives $ S_j^{(i)}, \ T_j^{(i)}$ of cosets  $\overline S_j^{(i)}, \ \overline T_j^{(i)}$, $j=1,\dots,k_i$, $i=1,\dots,n-k$. Since the Lie algebra $L$ is locally nilpotent, the subalgebra $N$ is nilpotent. Denote $L_1=N+I$. It is easy to see that
$$ \rank_R (L_1/I)=\rank_R (L_1/I)'=n-k=\rank_R(L/I).$$ This implies the equalities $\rank_R L_1=\rank_R L_1'=n$.
Since $L_1/I=N+I/I\simeq N/(N\cap I)$ is a nilpotent Lie  algebra, the center $Z(L_1/I)$ is nonzero. Let us choose a nonzero $D_1+I\in Z(L_1/I)$ and denote  $J_1=R(D_1+I)\cap L_1$. Then $J_1$ is an ideal of $L_1$ by Lemma~\ref{MakPetr} (2). Since $RI\cap L=I$ and $D_1\not \in I$, we get $\rank_R J_1=k+1$. If $k+1<n$, then we take nonzero $D_2+J_1\in Z(L_1/J_1)$ and consider $J_2=R(D_2+J_1)\cap L_1$. The ideal $J_2$ of $L_1$ is of rank $k+2$ over $R$ and $RJ_2\cap L_1=J_2$. By a continuation of these arguments, we construct a series of ideals
$$I\subset J_1\subset  \dots \subset J_{n-k-1}\subset J_{n-k}=L_1.$$
 of the Lie algebra $L_1=N+I$. Since the quotient  algebra $L_1/J_{n-k-1}$ is of rank 1 over $R$ and nilpotent, it is easy to see that $L_1/J_{n-k-1}$ is abelian (Lemma~\ref{abelian_factor}). Then $L_1'\subseteq J_{n-k-1}$, and  thus $\rank_R L_1'\leq n-1$. The latter  contradicts the equation $\rank_R L_1'=n$. The obtained contradiction shows that $\rank_R (L/I)'<\rank_R (L/I)$.
\end{proof}

\begin{lemma}\cite[Lemma 7]{Uzh}\label{operators}
Let $V$ be a nonzero vector space over $\mathbb K$ (not necessary finite dimensional). Let $T_1,\ T_2,\dots,T_k$ be pairwise commuting locally nilpotent operators on $V$. Then there exists a nonzero $v_0\in V$ such that $T_1(v_0)=T_2(v_0)=\dots=T_k(v_0)=0$.
\end{lemma}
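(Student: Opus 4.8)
The plan is to argue by induction on the number $k$ of operators. For the base case $k=1$, the assertion is nothing more than a reformulation of local nilpotency: choosing any nonzero $v\in V$ and letting $m\geq 1$ be minimal with $T_1^{m}(v)=0$, the vector $v_0:=T_1^{m-1}(v)$ is nonzero and satisfies $T_1(v_0)=0$.

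For the inductive step I would assume the result for any family of $k-1$ pairwise commuting locally nilpotent operators and apply it to $T_1,\dots,T_{k-1}$, obtaining that the subspace $W:=\bigcap_{i=1}^{k-1}\Ker T_i$ is nonzero. The crucial point is that $W$ is invariant under $T_k$: for $v\in W$ and $1\leq i\leq k-1$ one has $T_i(T_k v)=T_k(T_i v)=T_k(0)=0$ because $T_i$ and $T_k$ commute, so $T_k v\in W$. Since the restriction of a locally nilpotent operator to an invariant subspace is again locally nilpotent, $T_k|_W$ is a locally nilpotent operator on the nonzero space $W$, and the $k=1$ case yields a nonzero $v_0\in W$ with $T_k(v_0)=0$. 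By the definition of $W$ this $v_0$ is also annihilated by $T_1,\dots,T_{k-1}$, which completes the induction.

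This is the ``local'' counterpart of the classical fact that commuting nilpotent operators on a nonzero space have a common kernel vector, and the argument is essentially routine. The only place where the hypotheses are used in an essential way is the $T_k$-invariance of the common kernel $W$ of $T_1,\dots,T_{k-1}$, which is precisely where pairwise commutativity enters; one also relies on the elementary observation that local nilpotency passes to invariant subspaces so that the induction closes. I do not expect a genuine obstacle in carrying this out.
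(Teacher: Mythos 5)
Your proof is correct and complete: the base case is exactly the definition of local nilpotency, and the inductive step correctly uses commutativity to show that $W=\bigcap_{i=1}^{k-1}\Ker T_i$ is $T_k$-invariant, after which the $k=1$ case applies to $T_k|_W$. The paper only cites this statement as \cite[Lemma 7]{Uzh} without reproducing an argument, and yours is the standard proof of this fact, so there is nothing to add.
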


\begin{lemma}\label{factor_center}
Let $L$ be a nonzero locally nilpotent subalgebra of  finite rank over $R$ of the Lie algebra $W(A)$. Let $I$ be a proper ideal of $L$ such that $I=RI\cap L$. Then the center of the quotient Lie algebra $L/I$ is nonzero.
\end{lemma}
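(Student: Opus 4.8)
\medskip

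The plan is to argue by induction on the positive integer $m=\rank_R(L/I)$ (positivity is Remark~\ref{note}). For $m=1$, Lemma~\ref{abelian_factor} applied to $L_1=I$ inside $L_2=L$ shows that $L/I$ is abelian, so $Z(L/I)=L/I\neq 0$. So let $m\geq 2$, assume the claim for smaller ranks, and assume $L/I$ is not abelian (otherwise there is nothing to prove).

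First I would pass to the ideal $M:=R(L'+I)\cap L$ of $L$, where $L'=[L,L]$; it is an ideal by Lemma~\ref{MakPetr}(2), it contains $I$ and satisfies $M=RM\cap L$ and $I=RI\cap M$, and by Lemma~\ref{factor_rank} its rank satisfies $\rank_R(M/I)=\rank_R(L/I)'<m$. By the inductive hypothesis $Z(M/I)\neq 0$. Since the centre of an ideal is always an ideal of the ambient Lie algebra, $Z(M/I)$ is an ideal of $L/I$; let $P$ be its preimage in $L$, so that $P\subseteq M$, the subspace $V:=P/I$ is a nonzero abelian ideal of $L/I$, and $[M,P]\subseteq I$, i.e.\ $M/I$ (and in particular $(L/I)'\subseteq M/I$) acts trivially on $V$ via $\ad$.

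Next I would analyse the adjoint action of $L/I$ on $V$. Every operator $\ad(\bar x)|_V$, $x\in L$, is locally nilpotent, being a restriction of the locally nilpotent operator $\ad\bar x$ on the locally nilpotent Lie algebra $L/I$; and any two of them commute, since $[\ad\bar x,\ad\bar y]|_V=\ad([\bar x,\bar y])|_V=0$ because $[\bar x,\bar y]\in(L/I)'$. To apply Lemma~\ref{operators} I need only finitely many of these operators, and here the field of constants $F=F(L)$ is crucial: with $M_0:=RL'\cap L\subseteq M$, the $F$-space $FL/FM$ is a quotient of $FL/FM_0$, which is finite-dimensional over $F$ by Lemma~\ref{loc_nilp}(2). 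Choosing $x_1,\dots,x_q\in L$ whose cosets form an $F$-basis of $FL/FM$, Lemma~\ref{operators} yields a nonzero $v^*+I\in V$ with $[x_i,v^*]\in I$ for all $i$.

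Finally I would verify $v^*+I\in Z(L/I)$, which is where the choice of an $F$-basis pays off. For $x\in L$ write $x=\sum_i f_ix_i+m'$ with $f_i\in F$ and $m'\in FM$; since $v^*\in L$ and $F=F(L)$, the element $v^*$ kills every $f\in F$, so Lemma~\ref{product} gives $[f_ix_i,v^*]=f_i[x_i,v^*]$, and writing $m'$ as an $F$-combination of elements of $M$ shows that $[m',v^*]$ is an $F$-combination of brackets $[E,v^*]$ with $E\in M$. All of $[x_i,v^*]$ and $[E,v^*]$ lie in $I$ (the former by the choice of $v^*$, the latter because $v^*+I\in Z(M/I)$), so $[x,v^*]\in FI$; as $[x,v^*]$ also lies in $L$ and $FI\cap L=I$, we get $[x,v^*]\in I$, whence $v^*+I$ is a nonzero central element of $L/I$. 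The main obstacle is precisely this reduction to finitely many commuting operators: a locally nilpotent Lie algebra of finite rank need not be finitely generated (e.g.\ $u_n(\mathbb K)$), so one cannot simply take generators of $L/I$; the point is to trade them for an $F$-basis of the finite-dimensional $F$-space $FL/FM$ and to use that the central element $v^*$, living in $L$, annihilates $F$, so that the extra terms in Lemma~\ref{product} vanish.
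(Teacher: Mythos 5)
Your proof is correct and follows essentially the same route as the paper's: both pass to the ideal $M=R(L'+I)\cap L$, use Lemma~\ref{factor_rank} to drop the rank (you by induction, the paper by a minimal counterexample), reduce to the finitely many commuting locally nilpotent operators coming from an $F$-basis of the finite-dimensional space $FL/FM$, apply Lemma~\ref{operators}, and verify centrality via the field of constants. Your justification of the finite-dimensionality of $FL/FM$ (as a quotient of $FL/F(RL'\cap L)$) and of the final step $[x,v^*]\in FI\cap L=I$ is, if anything, slightly more explicit than the paper's.
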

\begin{proof}
Toward the contradiction, suppose the existence of nonzero subalgebras $L\subseteq W(A)$ with a proper ideal $I$ of $L$ that satisfy the conditions of the lemma and $Z(L/I)=0$. It is observed in Remark~\ref{note} that $\rank_R I<\rank_R L$. Let us choose among these Lie algebras a Lie algebra  $L$ with the least rank $\rank_R L/I.$  Then $\rank_R L/I>1$ (otherwise, in view of Lemma~\ref{abelian_factor},  the Lie algebra $L/I$ is abelian and has the nontrivial center).

Let $\rank_R L=n$, $\rank_R I=k$. Then $\rank_R (L/I)=n-k$. The derived subalgebra $(L/I)'=L'+I$ of the Lie algebra $L/I$ is of rank less then $\rank_R L/I$ by Lemma~\ref{factor_rank}. Set $M=R(L'+I)\cap L$. By Lemma~\ref{MakPetr} (2) $M$ is an ideal of $L$, and $\rank_R M=\rank_R (L'+I)< n$. It is easy to verify that $\rank_RM/I\leq \rank_R M<\rank_RL/I.$ The subalgebra $M$ of $L$ is locally nilpotent and thus $Z(M/I)\neq 0$ by our choice  of the Lie algebra $L$. Obviously, $Z(M/I)$ is a (possibly infinite-dimensional) vector space over $\mathbb K$. Note that the quotient Lie algebra $L/M$ is abelian and $\dim_F (FL/FM)=n-k$ by Lemma~\ref{loc_nilp} (2), where $F=F(L)$ is the field of constants for $L$.

Choose $D_1,\ D_2,\dots,D_{n-k}\in L$ such that the  cosets $D_1+FM, \ D_2+FM, \dots, \ D_{n-k}+FM$ form a basis of the vector space $FL/FM$ over $F$. Then linear operators $\ad D_1,\ \ad D_2,\dots,\ \ad D_{n-k}$ are locally nilpotent on the vector space $Z(M/I)$ over $\mathbb K$. Observe that $Z(M/I)$ is invariant of these linear operators as a characteristics ideal of the Lie algebra $M/I$.
Since $[\ad D_i, \ad D_j]=\ad [D_i,D_j]$ and $[D_i,D_j]\in M,$ linear operators $\ad D_i, \ad D_j$  pairwise commute on $Z(M/I)$ for $ i,j=1,2,\dots, n-k$. By Lemma~\ref{operators}, there exists a nonzero element $D_0+I\in M/I$ such that
$$\ad D_i(D_0 +I)=\ad D_i(D_0)+I=0+I \ \mbox{for all } i=1,2,\dots, n-k .$$
Hence $[FD_i, D_0+I]\subseteq FI$ for all $i=1,2,\dots, n-k$. Moreover,
$[M,D_0+I]\subseteq I$ and $[FM,D_0+I]\subseteq FI$. Since
$FL=FD_1+FD_2+\dots+FD_{n-k} +FM$, we obtain $[FL, D_0+I]\subseteq FI$. The latter states that $D_0+I\in Z(FL/FI)$. Therefore, in view of the condition $I=RI\cap L$, we get $D_0\in Z(L/I)$.
\end{proof}

\begin{corollary}\label{nonzero_center}(see  \cite{Uzh}, Theorem 1).
Let $L$ be a nonzero locally nilpotent subalgebra of  finite rank over $R$ of the Lie algebra $W(A)$. Then the center of the Lie algebra $L$ is nonzero.\end{corollary}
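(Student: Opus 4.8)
The plan is to deduce this statement immediately from Lemma~\ref{factor_center} by specializing the ideal $I$ to be the zero ideal $\{0\}$. First I would observe that $I=\{0\}$ is a proper ideal of $L$: it is an ideal of any Lie algebra, and it is proper precisely because $L$ is nonzero by hypothesis. Next I would verify the normalization condition $I=RI\cap L$ required by Lemma~\ref{factor_center}; this is automatic, since $RI=R\cdot\{0\}=\{0\}$ and hence $RI\cap L=\{0\}=I$. The remaining hypotheses of Lemma~\ref{factor_center}, namely that $L$ is locally nilpotent and of finite rank over $R$, are exactly the hypotheses assumed in the corollary, so nothing extra needs to be checked.

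Having verified the hypotheses, I would invoke Lemma~\ref{factor_center} to conclude that the center of the quotient Lie algebra $L/I=L/\{0\}$ is nonzero. Identifying $L/\{0\}$ with $L$ in the obvious way, this says precisely that $Z(L)\neq 0$, which is the assertion of the corollary, recovering Theorem~1 of \cite{Uzh}. I do not expect any obstacle here: the entire substantive content has already been absorbed into Lemma~\ref{factor_center}, whose proof handles the general case of an arbitrary proper ideal $I$ via an induction on $\rank_R L/I$ together with Lemma~\ref{operators} on commuting locally nilpotent operators. The only thing to notice is that the zero ideal is an admissible choice of $I$ whenever $L\neq 0$.
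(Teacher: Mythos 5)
Your proposal is correct and is exactly the intended deduction: the paper states the corollary immediately after Lemma~\ref{factor_center} with no separate proof, relying on the specialization $I=\{0\}$, which is a proper ideal satisfying $RI\cap L=\{0\}$ whenever $L\neq 0$. Nothing further is needed.
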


\begin{theorem}\label{th1}
Let $L$ be a locally nilpotent subalgebra of rank $n$ over $R$ of the Lie algebra $W(A)$. Let $F$ be the field of constants for $L$. Then
\begin{enumerate}
\item[(1)] $L$ contains a series of ideals
\begin{equation}\label{series}
0=L_0\subset L_1\subset \dots \subset L_n=L
\end{equation}
 such that $\rank_R L_s=s$ and the quotient Lie algebra $L_{s}/L_{s-1}$ is abelian for all $s=~1,2\dots,n$;

\item[(2)] There exists a basis $\{D_1, \ldots , D_n\}$ of $L$ over $R$ such that $$L_s=(RD_1+RD_2+\dots+RD_s)\cap L, \ [L,D_{s}]\subseteq L_{s-1} , \ s=1,2\dots,n ;$$

\item[(3)]  $\dim_F FL/FL_{n-1}=1$.
\end{enumerate}
\end{theorem}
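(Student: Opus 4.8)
The plan is to build the flag~\eqref{series} inside $L$ itself, one unit of rank at a time, using that the centres of the successive quotients $L/L_s$ are nonzero, and then to deduce (3) from a dimension count based on Lemma~\ref{loc_nilp}(2). No induction on $n$ is required; one may assume $n\ge 1$ (for $n=0$ we have $L=0$ and everything is vacuous).

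\emph{Construction of the flag (parts (1) and (2)).} Set $L_0=0$. Assume inductively that $L_s$ ($0\le s<n$) has been produced as an ideal of $L$ with $L_s=RL_s\cap L$ and $\rank_R L_s=s$; then $L_s$ is a proper ideal, so by Lemma~\ref{factor_center} (for $s=0$, by Corollary~\ref{nonzero_center}) the centre $Z(L/L_s)$ is nonzero, and we may pick $D_{s+1}\in L$ with $0\ne D_{s+1}+L_s\in Z(L/L_s)$. Then $[L,D_{s+1}]\subseteq L_s$, so $L_s+\mathbb K D_{s+1}$ is an ideal of $L$, and hence so is $L_{s+1}:=R(L_s+\mathbb K D_{s+1})\cap L$ by Lemma~\ref{MakPetr}(2). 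Since $D_{s+1}\notin RL_s$ (else $D_{s+1}\in RL_s\cap L=L_s$), one gets $RL_{s+1}=RL_s+RD_{s+1}$, so $\rank_R L_{s+1}=s+1$, $L_{s+1}=RL_{s+1}\cap L$, and $\rank_R L_{s+1}/L_s=1$; thus $L_{s+1}/L_s$ is abelian by Lemma~\ref{abelian_factor}. Running $s=0,1,\dots,n-1$ yields~\eqref{series}; at the final step $RL_n=RL_{n-1}+RD_n$ has $R$-dimension $n=\dim_R RL$, so $RL_n=RL$ and $L_n=RL\cap L=L$. For (2): the elements $D_s$ are $R$-independent because the ranks strictly increase, $RL_s=RD_1+\dots+RD_s$ by construction, so $L_s=RL_s\cap L=(RD_1+\dots+RD_s)\cap L$, and $[L,D_s]\subseteq L_{s-1}$ holds by the choice of $D_s$.

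\emph{Proof of (3).} Since $L/L_{n-1}$ is abelian, $L'\subseteq L_{n-1}$. Let $k=\rank_R L'$; by Lemma~\ref{factor_rank} with $I=0$ (using $L\ne0$) we have $k<n$, and putting $M=RL'\cap L$, Lemma~\ref{loc_nilp}(2) gives that $FL/FM$ is abelian with $\dim_F FL/FM=n-k$. From $L'\subseteq L_{n-1}$ we get $M\subseteq RL_{n-1}\cap L=L_{n-1}$, so $FM\subseteq FL_{n-1}\subseteq FL$ and $\dim_F FL/FL_{n-1}=(n-k)-\dim_F FL_{n-1}/FM$. As $RM=RL'\subseteq RL_{n-1}$, pick $E_1,\dots,E_{n-1-k}\in L_{n-1}$ whose images form an $R$-basis of $RL_{n-1}/RM$; being $R$-independent modulo $RM$, they are $F$-independent modulo $FM$ (because $F\subseteq R$ and $FM\subseteq RM$), so $\dim_F FL_{n-1}/FM\ge n-1-k$ and therefore $\dim_F FL/FL_{n-1}\le1$. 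Finally $FL_{n-1}\ne FL$, for otherwise $RL_{n-1}=R\cdot FL_{n-1}=R\cdot FL=RL$, contradicting $\rank_R L_{n-1}=n-1$; hence $\dim_F FL/FL_{n-1}=1$.

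The main obstacle is keeping each $L_s$ an ideal of the whole algebra $L$ rather than only of $L_{s+1}$: the obvious recursion ``replace $L$ by $L/L_1$ and induct on $n$'' fails, since such a quotient need not be (isomorphic to) a subalgebra of any $W(B)$, and so the structural lemmas above do not apply to it. Working entirely inside $L$ and producing $L_{s+1}$ from a central element of $L/L_s$ via Lemma~\ref{factor_center} circumvents this. A second, purely technical, pitfall is that one should not try to prove $FL\cap RL_{n-1}=FL_{n-1}$ directly (the $F$-span and $R$-span of $L_{n-1}$ interact awkwardly); instead the count in the proof of (3) is carried out inside $FL$, comparing $FM\subseteq FL_{n-1}\subseteq FL$ and exploiting the exact value $\dim_F FL/FM=n-k$ supplied by Lemma~\ref{loc_nilp}(2).
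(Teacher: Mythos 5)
Your proposal is correct and follows essentially the same route as the paper: parts (1)--(2) are built exactly as in the paper's proof, by repeatedly lifting a nonzero central element of $L/L_s$ (Corollary~\ref{nonzero_center} and Lemma~\ref{factor_center}), closing up with $R(\cdot)\cap L$ via Lemma~\ref{MakPetr}(2), and invoking Lemma~\ref{abelian_factor} for the abelian quotients. For part (3) the paper only points to the argument of Lemma~\ref{series_nilp_case}; your explicit dimension count comparing $FM\subseteq FL_{n-1}\subseteq FL$ with $M=RL'\cap L$ and $\dim_F FL/FM=n-k$ from Lemma~\ref{loc_nilp}(2) is a valid way to carry that step out.
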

\begin{proof}
{\em(1)-(2)} By Corollary~\ref{nonzero_center}, there exists a nonzero $D_1\in Z(L)$. Set $L_1=RD_1\cap L$. By Lemma~\ref{MakPetr} (2), $L_1$ is an ideal of $L$ of rank 1 over $R$. Assume that we have constructed basic elements $D_1, \ D_2, \dots, D_k$ of $L$ over $R$ such that $L_s=(RD_1+RD_2+\dots+RD_s)\cap L$ and $[L,D_s]\subseteq L_{s-1}$ for all $s=1,2,\dots,k$. Let us construct $D_{k+1}$. By Lemma~\ref{factor_center} the center $Z(L/L_{k})$ is nontrivial, so there exists $D_{k+1}\not \in L_k $ such that $D_{k+1}+L_k\in Z(L/L_k)$.
Then $[L,D_{k+1}]\subseteq L_k$, and one can easily see that $D_1, \ldots, D_k,\ D_{k+1}$ are linearly independent over $R$.
By Lemma~\ref{ideal} (2), $L_{k+1}=RD_{k+1}\cap L+L_{k}$ is an ideal of $L/L_k$. In view of the form of the ideal  $L_k$, we get that $L_{k+1}=(RD_1+\dots+RD_k+RD_{k+1})\cap L$ is an ideal of $L$ of rank $k+1$ over $R$.
We construct a series of ideals (\ref{series}) and a basis from the conditions of the theorem. Moreover, since $\rank_R L_{s+1}/L_{s}=1$ Lemma~\ref{abelian_factor}  implies that the quotient Lie algebras $L_{s+1}/L_{s}$ are abelian for all $s=0,1,\dots,n-1$.

 (3) The proof is analogous to the proof of Lemma~\ref{series_nilp_case}.
\end{proof}

\section{Locally nilpotent subalgebras of rank 3 of the Lie algebra $W(A)$}
In the following lemma the main results of \cite{PetravchukADM} are collected.
\begin{lemma}
\cite[Lemmas 8, 9]{PetravchukADM}\label{nilp_rk3}
Let $L$ be a nilpotent subalgebra of rank $3$ over $R$ of the Lie algebra $W(A)$. Let $Z(L)$ be the center of $L$ and $F$ the field of constants for $L$. If $\dim_F FL\geq 4,$ then there exist $a, \ b\in R$, integers $k\geq 1, \ n\geq 0, \ m \geq 1,$ and pairwise commuting elements $D_1,\ D_2, \ D_3 \in L$  such that the Lie algebra $FL$ is contained in the nilpotent subalgebra  $\widetilde L\subseteq W(A)$ of one of the following types:
 \begin{enumerate}
\item[(1)] If $\rank_R Z(L)=2$, then
$$\widetilde L=F\langle D_3, D_1,aD_1,\dots, \frac{a^k}{k!} D_1,D_2,aD_2,\dots, \frac{a^k}{k!} D_2\rangle,$$ where $D_1(a)=D_2(a)=0$ and $D_3(a)=1$.
\item[(2)]  If $\rank_R Z(L)=1$, then $\widetilde L$ is either the same as in (1), or
$$\widetilde L=F\langle D_3, D_2,aD_2,\dots, \frac{a^n}{n!} D_2, \Bigl\{\frac{a^ib^j}{i!j!}D_1\Bigr\}_{i,j=0}^{n,m} \rangle, $$ where $D_1(a)=D_2(a)=0$ and $D_3(a)=1$, $D_1(b)=D_3(b)=0$ and $D_2(b)=1$.
 \end{enumerate}
\end{lemma}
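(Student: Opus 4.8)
The plan is to pass to the algebra $FL$ and construct the model $\widetilde L$ by hand. By Lemma~\ref{MakPetr}~(1), $FL$ is a nilpotent Lie algebra over $F$ of rank $3$ over $R$, and by Lemma~\ref{MakPetr}~(3) it is finite-dimensional over $F$; since $\widetilde L$ is only required to \emph{contain} $FL$, it suffices to exhibit some nilpotent over-algebra of the prescribed shape inside $W(A)$. A useful remark used throughout is that every element of $F=F(L)$ is killed by each $D\in L$, so an element of $L$ that is central in $L$ is in fact central in $FL$. The first task is to pin down $\rank_R Z(L)$: it is at least $1$ by Corollary~\ref{nonzero_center}, and it cannot be $3$, for if $D_1,D_2,D_3\in Z(L)$ were $R$-independent they would span $RL$, and writing $D=\sum r_iD_i\in L$ and using Lemma~\ref{product} and centrality we would get $0=[D,D']=-\sum_i D'(r_i)D_i$ for every $D'\in L$, whence $r_i\in F$ and $\dim_F FL=3$, contrary to the hypothesis $\dim_F FL\geq4$. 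So $\rank_R Z(L)\in\{1,2\}$, which is exactly the case split in the statement.

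Next I would fix a basis of $RL$ over $R$ consisting of pairwise commuting $D_1,D_2,D_3\in L$, together with the functions $a$ (and $b$). When $\rank_R Z(L)=2$ this is immediate: take $R$-independent $D_1,D_2\in Z(L)$ and complete by any $D_3\in L$. When $\rank_R Z(L)=1$, let $D_1$ span the (one-dimensional over $F$, by Lemma~\ref{MakPetr}~(4)) center; the rank-$1$ and rank-$2$ structure theory of Lemma~\ref{MakPetr}~(4), applied to the relevant sections of $L$, lets one choose commuting $D_2,D_3$ that together with $D_1$ span $RL$ over $R$, after a correction of $D_3$ by an element of the abelian ideal $RD_1\cap L$. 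In both cases, since $\ad D_3$ is nilpotent on the nilpotent algebra $FL$, its action on the multiples of $D_1$ is by a nilpotent derivation, which produces $a\in R$ with $D_3(a)=1$ (first one sees $D_3(a)$ is a nonzero element of $F$ and rescales) and $D_1(a)=D_2(a)=0$; in the case $\rank_R Z(L)=1$ a symmetric argument gives $b\in R$ with $D_2(b)=1$ and $D_1(b)=D_3(b)=0$.

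It remains to check that the explicit $\widetilde L$ built from $D_1,D_2,D_3,a,b$ is a nilpotent subalgebra of $W(A)$ containing $FL$. Closure under the bracket and nilpotency of $\widetilde L$ follow by a direct computation from Lemma~\ref{product}, using $[D_i,D_j]=0$ and $D_i(a),D_i(b)\in\{0,1\}$. For the containment, for each generator $D_i$ set $J_i=\{r\in R:rD_i\in FL\}$. This is a finite-dimensional $F$-subspace of $R$, stable under $D_1,D_2,D_3$ (by Lemma~\ref{product} together with $[D_i,D_j]=0$), on which $D_i$ acts as zero (because $RD_i\cap FL$ has rank $\leq1$ and hence is abelian, by Lemma~\ref{MakPetr}~(4)), and on which each $\ad D_p$ being nilpotent on $FL$ forces $D_p$ to act nilpotently. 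Inducting on the nilpotence index and taking antiderivatives inside the polynomial ring $F[a]$ — and in $F[a,b]$ in the case $\rank_R Z(L)=1$, where a Poincar\'e-type double primitive is needed and one uses that $a$ is transcendental over $F$ since $D_3(a)=1\neq0$ in characteristic zero — one shows that $J_1$ (resp.\ $J_2$) consists only of polynomials in $a$ (resp.\ in $a$ and $b$) and that $J_3=F$. Choosing $k$ (resp.\ $n,m$) larger than all occurring degrees yields $FL\subseteq\widetilde L$.

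The routine parts are the reduction to $FL$, the determination of $\rank_R Z(L)$, and the verification that $\widetilde L$ is a nilpotent subalgebra. The main obstacle is the last step: determining \emph{precisely} which multiples $rD_i$ lie in $FL$ — above all, proving $J_3=F$ so that no term $tD_3$ with $t\notin F$ occurs, and proving $J_1,J_2$ contain only the expected polynomial combinations rather than arbitrary functions. This forces one to interlock the bracket identity, the nilpotency of the commuting operators $\ad D_1,\ad D_2,\ad D_3$ on $FL$, and finite-dimensionality of $FL$ over $F$; in the case $\rank_R Z(L)=1$ the interaction of the two directions $a,b$ is what makes the two-parameter family $\{a^ib^j/(i!j!)D_1\}$ appear and makes the argument distinctly more delicate than the rank-$2$ setting of Lemma~\ref{MakPetr}~(4).
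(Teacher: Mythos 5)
This statement is not proved in the paper at all: Lemma~\ref{nilp_rk3} is imported verbatim from \cite[Lemmas 8, 9]{PetravchukADM}, so there is no internal proof to compare against and your attempt must be judged on its own merits. Your overall architecture --- pass to $FL$, rule out $\rank_R Z(L)=3$, build a commuting $R$-basis $D_1,D_2,D_3$ together with the elements $a$ (and $b$), then bound the coefficient sets --- is the right one and matches the strategy of the cited source. The elimination of $\rank_R Z(L)=3$ is essentially correct (though the identity you actually need is $0=[D_j,D']=\sum_i D_j(s_i)D_i$ for the \emph{central} basis elements $D_j$, not $[D,D']=0$ for arbitrary $D,D'\in L$, which is not available a priori), and producing $a$ from a terminal segment of an $\ad D_3$-orbit is the standard and correct device.

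However, the proof as written has genuine gaps, concentrated exactly where you admit the difficulty lies. First, the sets $J_i=\{r\in R: rD_i\in FL\}$ do not control $FL$: a general element of $FL$ is $r_1D_1+r_2D_2+r_3D_3$ whose individual summands need not lie in $FL$, so even a complete determination of $J_1,J_2,J_3$ would not yield $FL\subseteq\widetilde L$. One must instead work with the filtration $I_1=RD_1\cap L\subset I_2=(RD_1+RD_2)\cap L\subset L$ and the coefficient spaces of the successive quotients. Second, in the case $\rank_R Z(L)=1$ the ``correction'' making $D_2$ and $D_3$ commute amounts to solving $D_3(g)=r$ in $R$ for the element $r$ with $[D_3,D_2]=rD_1$; an antiderivative exists only after one already knows that $r$ lies in $F[a]$, i.e.\ after the structure of $I_1$ has been established, so the order of your argument is circular as stated (and it is not addressed whether the corrected element still lies in $L$, as the statement requires). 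Third, the crux --- that no coefficient of $D_3$ outside $F$ survives (one must, for instance, exclude an element such as $bD_3$, whose presence together with $bD_2$ would violate nilpotency since $[bD_2,bD_3]=bD_3$), and that the coefficients of $D_1$ lie in $F[a]$ or $F[a,b]$ rather than in an arbitrary $D_2$-, $D_3$-locally-nilpotent subspace of $R$ --- is asserted via ``inducting and taking antiderivatives'' but never carried out; the one-variable antiderivative step is precisely \cite[Lemma 6]{PetravchukADM}, and the two-variable version needed for type (2) is the genuinely delicate part. As it stands the proposal is a plausible outline, not a proof.
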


In \cite{Uzh}, the description of locally nilpotent subalgebras of $W(A)$ of ranks $1$ and $2$ was given.
\begin{lemma}\cite[Theorem 2]{Uzh}\label{Uzh1}
Let $L$ be a locally nilpotent subalgebra of the Lie algebra $W(A)$ and  $F$  the field of constants for $L.$  \begin{itemize}
\item[(1)] If $\rank_R L=1,$ then $L$ is abelian and $\dim_F FL=1.$
\item[(2)] If $\rank_R L=2,$ then $FL$ is either nilpotent finite dimensional over $F$, or infinite dimensional over $F$ and there exist
$D_1,\ D_2\in L,$  $a\in R$ such that
 $$FL=\langle D_2, D_1, aD_1, \dots, \frac{a^k}{k!}D_1,\dots \rangle,$$ where $[D_1,D_2]=0$, $D_1(a)=0,$ and $D_2(a)=1$.
\end{itemize}
\end{lemma}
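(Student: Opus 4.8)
For part (1): since $L$ is locally nilpotent, any two of its elements generate a nilpotent subalgebra of rank $\le 1$ over $R$, which is abelian by Lemma~\ref{MakPetr}~(4); hence $L$ is abelian. Fixing a nonzero $D\in L$ we get $RL=RD$, so every element of $L$ is $rD$ with $r\in R$, and $0=[D,rD]=D(r)D$ forces $r\in F$; thus $L\subseteq FD$ and $\dim_F FL=1$. For part (2) I replace $L$ by $FL$: by Lemma~\ref{loc_nilp}~(1) the algebra $FL$ is locally nilpotent of rank $2$ over $R$, its field of constants is again $F$, and $F\cdot(FL)=FL$; the elements $D_1,D_2\in L$ demanded by the statement will be recovered at the end. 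If $FL$ is nilpotent, then $FL$ is finite-dimensional over $F$ by Lemma~\ref{MakPetr}~(3) --- the first alternative; so from now on assume $FL$ is not nilpotent.

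Then $FL$ is not abelian, so $(FL)'\neq 0$ and $\rank_R(FL)'=1$ by Lemma~\ref{factor_rank}. Put $M=R(FL)'\cap FL$; by Lemma~\ref{MakPetr}~(2) it is an ideal with $RM\cap FL=M$ and $\rank_R M=1$, and by Lemma~\ref{loc_nilp}~(2) $\dim_F FL/M=1$, so $FL=M\oplus FD_2$ for a suitable $D_2$. Being locally nilpotent of rank $1$, $M$ is abelian (part (1)), and $\dim_F M=\infty$, for otherwise $FL$ would be finite-dimensional, hence nilpotent. Choose any nonzero $D_1\in M$; then $M=\{rD_1:r\in V\}$ with $V$ an $F$-subspace of $R$ containing $1$, and $V\subseteq\Ker D_1$ because $M$ is abelian. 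Since $[D_2,D_1]\in(FL)'\subseteq M$, write $[D_2,D_1]=\beta D_1$ with $\beta\in V$; by Lemma~\ref{product}, $[D_2,rD_1]=(D_2(r)+\beta r)D_1$, so $\phi(r):=D_2(r)+\beta r$ is a well-defined $F$-linear operator on $V$ realising $\ad D_2$ on $M$, hence locally nilpotent. Picking $0\neq u\in\Ker\phi$ and replacing $D_1$ by $uD_1$ we may assume $[D_1,D_2]=0$: indeed $[D_2,uD_1]=(D_2(u)+\beta u)D_1=0$ and then $uD_1$ centralises $M\oplus FD_2=FL$. With the new $D_1$ we have $\phi=D_2|_V$, and $\phi\neq 0$, since $\phi=0$ would give $V\subseteq\Ker D_1\cap\Ker D_2=F$, i.e. $M=FD_1$, contradicting $\dim_F M=\infty$. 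Taking $v\in V$ with $\phi(v)\neq0$ and letting $b=\phi^{\,j}(v)$ be the last nonzero term of the chain $v,\phi(v),\phi^2(v),\dots$, we get $0\neq b\in\Ker D_2\cap V\subseteq F$ and $b=D_2(c)$ with $c=\phi^{\,j-1}(v)\in V$; set $a=b^{-1}c\in V$, so $D_2(a)=1$ and $D_1(a)=0$.

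It remains to show $V=F[a]$. As $\operatorname{char}\mathbb K=0$ and $D_2(a)=1$, the element $a$ is transcendental over $F_2:=\Ker D_2$ (hence over $F\subseteq F_2$). For $t\in V$ the operator $\ad D_2$ annihilates $tD_1$ in finitely many steps, i.e. $D_2^N(t)=0$ for some $N$ (using $[D_1,D_2]=0$); an induction on $N$ (with the antiderivative $a^{N-1}/(N-1)!$, legitimate in characteristic zero) identifies $\{s\in R:D_2^N(s)=0\}$ with the set of polynomials in $a$ of degree $<N$ and coefficients in $F_2$, so $V\subseteq F_2[a]$. Now $V\subseteq\Ker D_1\cap F_2[a]$; writing $s=\sum_j c_ja^j\in\Ker D_1\cap F_2[a]$ with $c_j\in F_2$, using that $[D_1,D_2]=0$ forces $D_1(c_j)\in F_2$, and using transcendence of $a$ over $F_2$, the relation $0=D_1(s)=\sum_j D_1(c_j)a^j$ yields $D_1(c_j)=0$, i.e. $c_j\in F$; hence $\Ker D_1\cap F_2[a]=F[a]$ and $V\subseteq F[a]$. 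Finally $V$ is an infinite-dimensional $F$-subspace of $F[a]$, stable under $D_2|_{F[a]}=d/da$, and contains $1$ and $a$; a short degree argument (if $k$ is least with $a^k\notin V$, then a lowest-degree element of $V$ of degree $\ge k$ has degree exactly $k$ by minimality under $d/da$, so its leading term --- hence $a^k$ --- lies in $V$, a contradiction) gives $V=F[a]$. Therefore $M=F[a]D_1$ and $FL=\langle D_2,D_1,aD_1,\tfrac{a^2}{2!}D_1,\dots,\tfrac{a^k}{k!}D_1,\dots\rangle$ with $[D_1,D_2]=0$, $D_1(a)=0$, $D_2(a)=1$; in particular $\dim_F FL=\infty$. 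To get $D_1,D_2\in L$: one may choose $D_2\in L$ from the outset (as $L\not\subseteq M$, since $FL\not\subseteq M$), while $Z(L)\neq0$ by Corollary~\ref{nonzero_center} and $Z(L)\subseteq Z(FL)=FD_1$ (an immediate computation), so replacing $D_1$ by a nonzero element of $Z(L)$ --- a nonzero $F$-multiple of the current $D_1$ --- preserves every relation and the $F$-span.

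The step I expect to be hardest is the identification $V=F[a]$: it requires keeping track of the two fields of constants $F\subseteq F_2$, the transcendence of $a$ over each, and --- essentially --- the relation $[D_1,D_2]=0$ obtained by normalising $D_1$. The reduction also has to be set up correctly: one must split according to whether $FL$ is nilpotent rather than whether it is finite-dimensional, because the finite chains $\langle D_2,D_1,aD_1,\dots,\tfrac{a^k}{k!}D_1\rangle$ are nilpotent and belong to the first alternative.
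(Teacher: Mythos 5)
The paper does not actually prove Lemma~\ref{Uzh1}: it is imported from \cite[Theorem 2]{Uzh}, so there is no internal proof to compare yours against line by line. That said, your argument is correct and self-contained modulo the lemmas the paper does quote, and it runs parallel to the strategy the paper uses later for rank $3$ (Theorem 2, Case 2): pass to $FL$, split off the rank-one ideal $M=R(FL)'\cap FL$ with $\dim_F FL/M=1$ via Lemma~\ref{loc_nilp}~(2), normalise $D_1$ into the centre using local nilpotency of $\ad D_2$, manufacture $a$ with $D_2(a)=1$, $D_1(a)=0$ as $b^{-1}c$ from the last two nonzero terms of the $\phi$-chain, and identify the coefficient space with $F[a]$ --- your induction on $N$ in $D_2^N(s)=0$ is exactly the content of \cite[Lemma 6]{PetravchukADM}, which the paper itself invokes for the analogous step in Theorem 2, and your transcendence/degree arguments correctly handle the two fields $F\subseteq F_2$. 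Two places deserve an extra sentence. First, the claim that ``$FL$ finite-dimensional over $F$ implies $FL$ nilpotent'' is true but not immediate from the definition of local nilpotency, which quantifies over finitely generated $\mathbb K$-subalgebras; it follows because $FL$ is a Lie algebra over $F$ by Lemma~\ref{MakPetr}~(1), so the nilpotency class of the $\mathbb K$-subalgebra generated by an $F$-basis bounds that of $FL$ by $F$-bilinearity of the bracket. Second, after replacing $D_1$ by $uD_1$ the coefficient space $V$ is replaced by $u^{-1}V$; you should note explicitly that this new space still contains $1$, still lies in $\Ker D_1$, and is still $D_2$-stable, so that the subsequent analysis applies to it. With those details spelled out, the proof is complete, including the final descent of $D_1,D_2$ back into $L$ via $Z(L)\subseteq Z(FL)=FD_1$ and Corollary~\ref{nonzero_center}.
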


\begin{theorem}
Let $L$ be a maximal (with respect to inclusion) locally nilpotent subalgebra of the Lie algebra $W(A)$ such that $\rank_R L=3$. Let $F$ be the field of constants for $L$. Then $FL=L$ and $L$ is a Lie algebra over $F$ of one of the following types:
\begin{enumerate}
\item[(1)] $L$ is a nilpotent Lie algebra of dimension  $3$  over $F$;
\item[(2)] $L=F\langle D_3,\{\frac{a^i}{i!}D_1\}_{i=0}^{\infty},\{\frac{a^i}{i!} D_2\}_{i=0}^{\infty}\rangle,$ where $D_1,\ D_2,\ D_3\in L$ and $a\in R$ such that $D_1(a)=D_2(a)=0$, $D_3(a)=1$, and $[D_i, D_j]=0$ for all $i,j=1,2,3$;
\item[(3)] $L=F\langle D_3,\{\frac{a^i}{i!} D_2\}_{i=0}^{\infty},\{\frac{a^ib^j}{i!j!}D_1\}_{i,j=0}^{\infty}\rangle,$ where $D_1, \ D_2, \ D_3\in L$ and $a,\ b\in R$ such that $D_1(a)=D_2(a)=0$, $D_3(a)=1$, and $D_1(b)=D_3(b)=0$, $D_2(b)=1$, and $[D_i, D_j]=0$ for all $i,j=1,2,3$.
\end{enumerate}
\end{theorem}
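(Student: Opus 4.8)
The plan is to reduce at once to the case $FL=L$ and then to argue separately according to whether $L$ is nilpotent or not, organising the non-nilpotent analysis around the ideal $M=RL'\cap L$. First I would show $FL=L$: by Lemma~\ref{MakPetr}(1) the set $FL$ is a $\mathbb K$-Lie-subalgebra of $W(A)$ containing $L$, and by Lemma~\ref{loc_nilp}(1) it is locally nilpotent over $F$; since the nilpotency class of a finitely generated Lie algebra does not change under extension of scalars, $FL$ is locally nilpotent over $\mathbb K$ as well. Maximality of $L$ then forces $FL=L$, so from now on $L$ is an $F$-submodule of $W(A)$ and the algebras furnished by Lemmas~\ref{Uzh1} and \ref{nilp_rk3} are already of the form $F\langle\cdots\rangle$; note also that $\rank_R L\le\dim_F L$.

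Suppose $L$ is nilpotent. Then $\dim_F L<\infty$ by Lemma~\ref{MakPetr}(3), so $\dim_F L\ge 3$. If $L$ is abelian, then for an $R$-basis $D_1,D_2,D_3\subseteq L$ of $RL$ and any $D=\sum r_iD_i\in L$, Lemma~\ref{product} gives $[D,D_j]=-\sum_iD_j(r_i)D_i=0$, whence every $r_i$ lies in $F=\Ker D_1\cap\Ker D_2\cap\Ker D_3$ and $\dim_F L=3$: case~(1). If $L$ is non-abelian, then $\rank_R Z(L)\le 2$ — the value $3$ would, via the flag of Lemma~\ref{series_nilp_case} and Lemma~\ref{MakPetr}(4), make $L$ abelian — so whenever $\dim_F L\ge 4$ Lemma~\ref{nilp_rk3} embeds $L$ into a nilpotent algebra $\widetilde L$ with \emph{finite} parameters $k$ (resp.\ $n,m$). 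Replacing those parameters by $\infty$ enlarges $\widetilde L$ to $\widehat L\supsetneq L$ which is still locally nilpotent: its generators $D_1,D_2,D_3$ commute and $\ad D_3$ acts as a locally nilpotent shift on each chain $\{a^i/i!\,D_j\}$, resp.\ on $\{a^ib^j/(i!j!)D_1\}$, so every finitely generated subalgebra is nilpotent. This contradicts maximality, so $\dim_F L=3$: case~(1).

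Now suppose $L$ is not nilpotent, so $L'\neq 0$ and, by Lemma~\ref{factor_rank} with $I=0$, $k:=\rank_R L'\in\{1,2\}$; put $M=RL'\cap L$, an ideal with $\rank_R M=k$ and $L/M$ abelian of dimension $3-k$ over $F$ (Lemma~\ref{loc_nilp}(2)). Since $L$ is not nilpotent, $M$ must be infinite-dimensional over $F$ — otherwise $L=M+FD_3$ (resp.\ $M+FD_2+FD_3$) would be finite-dimensional with all elements $\ad$-nilpotent, hence nilpotent by Engel's theorem. Next I rule out $k=1$: then $M$ is a rank-$1$ abelian ideal, $L=M+FD_2+FD_3$ with $[D_2,D_3]\in M$, and since $M$ is abelian $\ad D_2$ and $\ad D_3$ commute on $M$; by Lemma~\ref{operators} there is $0\neq D_1\in M\cap Z(L)$ with $M\subseteq(\Ker D_1)D_1$. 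Using that $D_2,D_3$ induce commuting locally nilpotent derivations on the coefficient space inside $\Ker D_1$ and that $R$ has transcendence degree $3$ over $F$, one produces $t\in\Ker D_1$ with $tD_1\notin L$ for which $\langle L,tD_1\rangle$ is again locally nilpotent, contradicting maximality. Hence $k=2$.

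When $k=2$, Lemma~\ref{Uzh1}(2) (applied to $M$ with its own field of constants) shows that, up to that field, $M$ is either abelian of rank $2$ or of the infinite shape $\langle D_2,D_1,aD_1,a^2/2!\,D_1,\dots\rangle$ with $[D_1,D_2]=0$, $D_1(a)=0$, $D_2(a)=1$, and $L=M+FD_3$. Choosing $D_1\in Z(L)$ (Corollary~\ref{nonzero_center}, Theorem~\ref{th1}), one analyses the locally nilpotent derivation $\ad D_3$ of $M$: local nilpotency on the $D_1$-chain rules out any positive power of $a$ in $D_3(a)$ — such a term would give $\ad D_3$ infinitely many nonzero eigenvalues there — so after rescaling $D_3(a)=1$, while $Z(L)\subseteq Z(M)$ has rank $2$ or $1$. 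If $\rank_R Z(L)=2$ one adjusts $D_3$ so that $[D_3,D_2]=0$, and maximality forces the two chains to be the full $\{a^i/i!\,D_1\}_{i\ge 0}$ and $\{a^i/i!\,D_2\}_{i\ge 0}$: case~(2). If $\rank_R Z(L)=1$, then $[D_3,D_2]\neq 0$ cannot be removed; introducing $b$ with $D_2(b)=1$, $D_1(b)=D_3(b)=0$ and again invoking maximality yields case~(3). A final routine check shows that each algebra listed in (1)--(3) is locally nilpotent and lies in no strictly larger algebra of these shapes, so it is maximal, which gives the converse. The main obstacle is exactly this last step: distilling from ``$\ad D_3$ is a locally nilpotent derivation of the infinite-dimensional rank-$2$ algebra $M$'' precisely the two shapes (2) and (3) and using maximality to close up the chains — the locally-nilpotent counterpart of the finite-dimensional computation behind Lemma~\ref{nilp_rk3}, and the bulk of the work; the enlargement argument ruling out $k=1$, where one must actually build a bigger locally nilpotent subalgebra, is the next most delicate point.
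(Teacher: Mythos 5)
Your outline gets the framing right ($FL=L$ by maximality, reduction of the nilpotent case to dimension $3$, and the idea that maximality should ``close up'' the chains), but the two steps you yourself flag as the hardest are exactly where the proof lives, and they are left as assertions. First, the exclusion of $\rank_R L'=1$ in the non-nilpotent case: you claim one can produce $t\in\Ker D_1$ with $tD_1\notin L$ such that $\langle L,tD_1\rangle$ is still locally nilpotent, invoking ``$R$ has transcendence degree $3$ over $F$.'' Neither the existence of such a $t$ nor that transcendence-degree claim is justified ($A$ is an arbitrary integral domain here), and constructing a strictly larger locally nilpotent algebra is genuinely delicate. The paper avoids this entirely: it organizes the case analysis by $\rank_R Z(L)$ rather than by $\rank_R L'$, and in the case $\rank_R Z(L)=1$ it shows the rank-$2$ ideal $FI_2$ cannot be abelian by an explicit computation --- producing $a$ with $D_3(a)=1$ from local nilpotency of $\ad D_3$, proving $FI_1=F[a]D_1$ (using maximality to absorb $F[a]D_1$ into $L$), and then replacing $D_2$ by $D_2-g(a)D_1$ to force $D_2\in Z(L)$, contradicting $\rank_R Z(L)=1$. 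That concrete argument is what your sketch would have to supply.

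Second, and more importantly, in the $k=2$ case you never establish the containment $L\subseteq L_1$ (resp.\ $L\subseteq L_2$) that must precede any appeal to maximality; ``maximality forces the two chains to be full'' only makes sense once you know every element of $L$ already has the form $\sum f_i(a)D_i$ (resp.\ involves only $a$ and $b$). Your proposed route --- directly analysing the locally nilpotent derivation $\ad D_3$ of the infinite-dimensional ideal $M$ --- is not carried out, and you acknowledge it is ``the bulk of the work.'' The paper's mechanism here is worth noting because it sidesteps that analysis: take an \emph{arbitrary} finitely generated nonabelian subalgebra containing $D_1,D_2,D_3$; it is nilpotent by local nilpotency, so Lemma~\ref{nilp_rk3} pins down its shape; and since the elements $a$ (and $b$) appearing there are determined by $D_1,D_2,D_3$ up to an additive constant in $F$, all these finitely generated pieces sit inside one and the same infinite model algebra. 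Only then does maximality give equality. Without that reduction (or a completed substitute for it), the proposal does not prove the theorem.
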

\begin{proof}
The Lie algebra $FL$ is locally nilpotent by Lemma~\ref{loc_nilp}. Therefore, the maximality of the subalgebra $L\subseteq W(A)$ implies $FL=L$. By Corollary~\ref{nonzero_center} $Z(L)\neq 0$. If $\rank_R Z(L)=3$, then one can easily see that $L$ is abelian. Thus, it follows from Lemma~\ref{MakPetr} that $L$ is the abelian Lie algebra of dimension $3$ over $F$ and $L$ is of type (1) from the conditions of the theorem.

{\em Case 1.}
 Let $\rank_R Z(L)=2$. Let us choose arbitrary elements $D_1,\ D_2\in Z(L)$ linearly independent over $R$ and set $I=(RD_1+RD_2)\cap L$. Then in view of Theorem~\ref{th1}, $I$ is an ideal of the Lie algebra $L$ of rank $2$ over $R$ and $\dim_F (FL/FI)=1$. It is easy to verify that $I$ is abelian. Indeed, take an arbitrary $D=r_1D_1+r_2D_2\in I$. Then
$$[D_1, D]=D_1(r_1)D_1+D_1(r_2)D_2=0, \ \ [D_2, D]=D_2(r_1)D_1+D_2(r_2)D_2=0,$$
whence it follows $r_1,\ r_2\in \Ker D_1\cap \Ker D_2$. Therefore, for all $D, D'\in I$ we get $[D, D']=0.$

Let us take an element $D_3\in L/I$.  It was proved above that  $FL=FI+FD_3$. Consider a nonabelian finitely generated (over $\mathbb K$) subalgebra $M$ of the Lie algebra $L$ such that $D_1,\ D_2, \ D_3 \in M$. Since $\rank_R M=3$ and $\rank_R Z(M)=2$, Lemma~\ref{nilp_rk3} implies that $FM$ is contained in some subalgebra $L_M$ of $W(A)$ of the form
$$L_M=F\langle D_3, D_1,aD_1,\dots, a^n/n! D_1,D_2,aD_2,\dots, a^n/n! D_2\rangle,$$
where $a\in R$ such that $D_3(a)=1$ and $D_1(a)=D_2(a)=0$. Then $M$ is contained in the locally nilpotent Lie algebra of the form
$$L_1=F\langle D_3, D_1,aD_1,\dots, a^n/n!D_1,\dots ,D_2,aD_2,\dots, a^n/n! D_2,\dots\rangle,$$
where $a\in R$ is defined by derivations $D_1,\ D_2, \ D_3$  up to a term from $F$. Since the subalgebra $M$ is an arbitrarily chosen in $L$, we have $L\subseteq L_1$. In view of the maximality of the Lie algebra $L$, we get that $L=L_1$ and $L$ is of type (2) from the conditions of the theorem.

{\em Case 2.} Let $\rank_R Z(L)=1$. If $\dim_F FL=3$, then the Lie algebra $L$ is nilpotent of dimension $3$ over $F$ and $L$ is of type (1) from the conditions of the theorem. Therefore, further we assume that $\dim_F FL\geq 4$. Take a nonzero $D_1\in Z(L)$. Then $I_1=RD_1\cap L$ is an ideal of the Lie algebra $L$ of rank 1 over $R$ (by Lemma~\ref{MakPetr}). The center of the quotient Lie algebra $L/I_1$ is nontrivial by Lemma~\ref{factor_center} and thus, we may choose a nonzero $D_2+I_1\in Z(L/I_1)$. By Theorem~\ref{th1}, $I_2=(RD_1+RD_2)\cap L$ is an ideal of the Lie algebra $L$, $\rank_R I_2=2$, and $\dim_F FL/FI_2=1$. Then for some $D_3\in FL\backslash FI_2$ we get $FL=FI_2+FD_3$. Moreover,  from the choice of $D_2$ it is easy to see that $[D_3,D_2]\in I_1$, so
$[D_3,D_2]=r_3D_1$ for some $r_3\in R$. In particular, this implies that derivations $D_2$ and  $D_3$ are commuting on $\Ker D_1$, i. e.
 \begin{equation}\label{eq_2}
 D_3(D_2(x))=D_2(D_3(x)) \mbox{ for all } x\in \Ker D_1.
 \end{equation}

Let us show that the ideal $FI_2$ is nonabelian.  Suppose this is not true and $FI_2$ is abelian. Since $\dim_F FL\geq 4$,  $\dim_F FI_2\geq 3$. Then there exists $D=r_1D_1+r_2D_2\in FI_2$ such that at least one of the coefficients $r_1, \ r_2$ is not in $F$.
From the obvious equalities
$$[D_1,D]=D_1(r_1)D_1+D_1(r_2)D_2=0,  \  \
[D_2,D]=D_2(r_1)D_1+D_2(r_2)D_2=0,$$
it follows $r_1, r_2\in \Ker D_1\cap \Ker D_2.$

Since at least one of $r_1, \ r_2$ not in $F$, either $D_3(r_1)\neq 0$ or $D_3(r_2)\neq 0$. Firstly, let $D_3(r_2)\neq 0$. The relation   $[D_3,D_2]=r_3D_1$ implies  that  for any integer $m\geq 1$ it holds
$$(\ad D_3)^m(D)=R_mD_1+D_3^m(r_2)D_2  \mbox{ for some } R_m\in R.$$
Since the  linear operator $\ad D_3$ is locally nilpotent on $L$, there exists an integer $k> 1$ such that $$D_3^{k-1}(r_2)\neq 0, \ D_3^{k}(r_2)=0.$$
Let us denote $r_0=D_3^{k-2}(r_2)$. Then $D_3(r_0)\neq 0$ and $D_3^2(r_0)=0$. Furthermore, it is easy to verify that $D_1(r_0)=D_2(r_0)=0$.
Set $a=\frac{r_0}{D_3(r_0)}\in R\backslash F$. One can easily checked that $D_1(a)=D_2(a)=0$ and $D_3(a)=1$.

Now let $D_3(r_2)=0$. Then $r_2\in F$, so $r_1\not \in F$ and $r_1D_1\in FI_2$ (because $r_2D_2\in FI_2$ and $D\in FI_2$). Note that $D_3(r_1)\neq 0$. Using the relation
$$(\ad D_3)^m(r_1D_1)=D_3^m(r_1)D_1, \ \ m\geq 1,$$ one can show (as in the case $r_2\not \in F$) that there exists $a\in R$ such that $D_1(a)=D_2(a)=0$ and $D_3(a)=1$.

Let us prove that $FI_1=F[a]D_1$, where $F[a]=\{f(a)| f(t)\in F[t]\}.$ Consider the  sum $L_1=L+F[a]D_1$. Since $[I_2, F[a]D_1]=0$ and $[D_3, F[a]D_1]\subseteq F[a]D_1$, $L_1$ is a subalgebra of $W(A)$ and  $L\subseteq L_1$. From the maximality of $L$, we get  $L_1=L$. Hence $F[a]D_1\subseteq FI_1$. Conversely, take an arbitrary element $rD_1\in I_1$. Then $D_1(r)=0$ and $D_2(r)=0$ since the ideal $FI_2$ is abelian by our assumption. The operator $\ad D_3$ acts locally nilpotently on $rD_1$, so $D_3^k(r)=0$ for some integer $k\geq 1$. One can show (using   \cite[Lemma 6]{PetravchukADM}) that $r$ is a linear combination over the field $F$ of elements  $1,\ a,\dots, a^t$ for some positive integer $t.$  Thus $rD_1\in F[a]D_1$. Therefore, $I_1\subseteq F[a]D_1$ and $FI_1=F[a]D_1$.

Since $[D_3,D_2]\in I_1$, we get that $[D_3, D_2]=f(a)D_1$ for some $f(t)\in F[t]$. The field $F$ is of characteristic zero, so there exists a polynomial $g(t)\in F[t]$ such that $g'(t)=f(t)$. Note that $D_3(g(a))=f(a)$ since $D_3(a)=1$. Set  $\widetilde D_2=D_2-g(a)D_1\in I_2$. Then $[D_3,\widetilde D_2]=0$, and  $\widetilde D_2$ has the same other properties as  the derivation $D_2$. Thus we may assume without loss of generality that $[D_3,D_2]=0$. Then $D_2\in Z(L)$ and $\rank_R Z(L)=2$, which contradicts our assumption. This means that the ideal $FI_2$ of the Lie algebra $FL$ is nonabelian.

Let us show that $L$ is a Lie algebra of type (3) from the conditions of the theorem. Consider  an arbitrary nonabelian finitely generated subalgebra $M$ of $FI_2$ such that $D_1, \ D_2 \in M$ (such a subalgebra does exist  because $FI_2$ is a nonabelian ideal of $L$). Denote by $N$ a subalgebra of $L$ generated by the Lie algebra $M$ and $D_3$. Then $\rank_R N=3$ and $N$ is a nonabelian Lie algebra that contains a nonabelian ideal $N_2$ of rank $2$ over $R$ such that $\dim_F FN/FN_2~=~1$. By Lemma~\ref{nilp_rk3}, there exist $a, \ b \in R$ such that $D_1(a)=D_2(a)=0$, $D_3(a)=1$, $D_1(b)=D_3(b)=0$, and $D_2(b)=1$ and $FN$ is contained in the Lie algebra $L_N$ of the form
$$L_N=F\langle D_3, D_2, aD_2, \dots, \frac{a^k}{k!} D_2, \Bigl\{ \frac{a^ib^j}{i!j!}D_1 \Bigl\}_{i,j=0}^{k,m}\rangle.$$
The elements $a,b \in R$ are uniquely determined by the derivations $D_1, \ D_2, \ D_3$ up to a summand in $F$. Thus $N$ is contained in the Lie  algebra $$L_2=F\langle D_3, D_2, aD_2, \dots, \frac{a^k}{k!} D_2, \dots, \Bigl\{ \frac{a^ib^j}{i!j!}D_1 \Bigl\}_{i,j=0}^{\infty}\rangle.$$
The Lie algebra $L_2$  is a locally nilpotent subalgebra of $W(A)$ of rank $3$ over $R$. Since $N$ is an arbitrarily chosen subalgebra of $L$, $L $ is contained in $L_2$. In view of  maximality of the Lie algebra $L$, we obtain $L=L_2$. The proof is complete.
\end{proof}

\begin{example}
Let $A=\mathbb K[x_1,x_2,x_3]$ and $R=\mathbb K(x_1,x_2,x_3)$. Then the Lie algebra $L=\mathbb K\langle x_1\frac{\partial}{\partial x_1}, x_2\frac{\partial}{\partial x_2}, x_3\frac{\partial}{\partial x_3}\rangle$ is abelian, $\rank_R L=3$, and $L$ is a maximal locally nilpotent subalgebra of the Lie algebra  $W_{3}(\mathbb K)$ (see \cite[Proposition 1]{SysakADM}).
\end{example}
%%%%%%

\end{document}